\title{A  block  tangential Lanczos method for model reduction of large-scale first and second order dynamical systems} 
\author{K. Jbilou\thanks{Universit\'e du Littoral, C\^ote d'Opale, batiment H. Poincarr\'e, 50 rue F. Buisson, 62280 Calais Cedex, France, email: jbilou@univ-littoral.fr} \and  Y. Kaouane$^*$}
\newcommand{\di}{\displaystyle}
\newcommand{\R}{\mathbb{R}}
\newcommand{\C}{\mathbb{C}}
\newtheorem{lem}{Lemma}[section]
\begin{document}
\maketitle


\begin{abstract}
In this paper, we present a new approach for model reduction of large scale first and second order dynamical systems  with multiple inputs and multiple outputs (MIMO). This approach is  based on the projection of the initial problem onto  tangential Krylov subspaces to  produce a  simpler  reduced-order model that approximates well the behaviour of the original model. We present an algorithm named: Adaptive Block Tangential  Lanczos-type (ABTL) algorithm. We give some algebraic properties and present some numerical experiences to show the effectiveness of the proposed algorithms.
\end{abstract}

\begin{keywords}
 {Krylov subspaces, Model reduction, Interpolation, Tangential directions.}
\end{keywords}


\section{Introduction} 

Consider a linear time-invariant multi-input, multi-output linear time independent (LTI) dynamical 
system described by the state-space equations 
\begin{equation}\label{eq001}
\Sigma:= \left\{\begin{array}{lllll}
\dot{x}(t)  & = & Ax(t) +  Bu(t)\\
  y(t)  & = & Cx(t),\\
 \end{array}\right.
\end{equation}
where $x(t) \in  \R^n$ denotes the state vector, $u(t)$ and $y(t)$ are
the  input and the output signal vectors, respectively. The 
matrix $A  \in \R^{n\times n}$ is assumed to be large and sparse and  $B,\ C^{T} \in \R^{n\times p}$. The transfer function  associated to the 
system in (\ref{eq001}) is given as
\begin{equation}\label{eq002}
H(\omega) := C(\omega I_n-A)^{-1}B \in \R^{p\times p}.
\end{equation}
The goal of our model reduction approach  consists in defining  two orthogonal matrices  $V_m$ and $W_m\in \R^{n\times m}$ (with $m \ll n$) to produce a much
smaller order system $\Sigma_m$ with the state-space form
\begin{equation}\label{eq003}
\Sigma_m: \left\{\begin{array}{lllll}
\dot{x}_m(t)  & = & A_mx_m(t)+ B_mu(t)  \\
  y_m(t)  & = & C_mx_m(t),  \\
 \end{array}\right.
\end{equation}
and its  transfer function is defined by
\begin{equation}\label{eq004}
H_m(\omega) := C_m( \omega I_m-A_m)^{-1}B_m \in \R^{p\times p},
\end{equation}
where $A_m =W_m^TAV_m\in \R^{m\times m}$, $B_m=W_m^TB\in \R^{m\times p}$ and $ C_m=CV_m\in \R^{p\times m}$, 
such that the reduced system $\Sigma_m$ will have an output $y_m(t)$ as close as possible to the one of  the original system to any
given input $u(t)$, which means that for some chosen norm, $\Vert y-y_m \Vert$ is small.\\

\
\\
Various model reduction  techniques, such as Pad\'e approximation \cite{B9,B30}, balanced truncation \cite{B22}, optimal Hankel norm \cite{B31} and Krylov subspace  methods, \cite{B32,B33,B34,B35} have been used for large multi-input multi-output (MIMO) dynamical systems, see  \cite{B23,B31,B36}. Balanced Truncation Model Reduction (BTMR) method  is a very popular
method; \cite{B004,B002}; the method preserves the stability and provides a bound for the approximation error. In the case of small to medium systems,
(BTMR) can be implemented efficiently. However, for large-scale settings, the method is quite expensive to implement, because it requires the computation of two Lyapunov equations, and results in a computational complexity of $\mathcal{O}(n^3)$ and a storage requirement of $\mathcal{O}(n^2)$, see \cite{B004,B006,B007}. In this paper, we project the system \eqref{eq001}  onto  the following block tangential Krylov subspaces  defined as,
   $$
  \widetilde{\mathcal{K}}_m(A,B)= Range\{(\sigma_1I_n-A)^{-1}BR_1,...,(\sigma_mI_n-A)^{-1}BR_m\},
$$
  $$
  \widetilde{\mathcal{K}}_m(A^T,C^T)= Range\{(\mu_1I_n-A)^{-T}C^TL_1,...,(\mu_mI_n-A)^{-T}C^TL_m\},
$$
in order to obtain a small scale dynamical system. The $\{\sigma_i\}_{i=1}^{m}$ and  $\{\mu_i\}_{i=1}^{m}$ are the right and left interpolation points, the  $\{R_i\}_{i=1}^{m}$ and $\{L_i\}_{i=1}^{m}$ are the right and left blocks tangent directions with $R_i,\ L_i\in \R^{p\times s}$ with $s \leq p$. Later, we will show how to choose these tangent interpolation points and directions.  \\

\noindent The paper is organized as follows: In section 2 we give some definition used later and we introduce the tangential interpolation. In  Section 3, we present the tangential block Lanczos-type method and the corresponding algorithm. Section 4 is devoted to the selection of the interpolation points and the tangential directions that are used in the construction of  block tangential Krylov subspaces, and we present briefly the adaptive tangential block  Lanczos-type algorithm. Section 5 we treat the model reduction of second-order systems. The last section is devoted to  numerical tests and comparisons with some well known model order reduction methods.
 
\section{Moments and interpolation}
We first give the following definition.
\begin{definition}
Given the  system $\Sigma$, its associated transfer 
function $H(s) =C(\omega I_n-A)^{-1}B $ can be decomposed through a
 Laurent series expansion around a given $\sigma \in \R$ 
 (shift point), as follows
 \begin{equation}\label{eq005}
H(\omega)= \sum_{i=0}^\infty \eta_i^{(\sigma)}  \frac{(\omega-\sigma)^i}{i!},
\end{equation}
where $\eta_i^{(\sigma)}  \in \R^{p\times p}$ is called the $i$-th moments at $\sigma$ associated to the system and  defined as follows
\begin{equation}\label{eq006}
\eta_i^{(\sigma)} = C(\sigma I_n-A)^{-(i+1)}B =(-1)^i\frac{d^i}{d\omega^i}H(\omega)|_{\omega=\sigma},\qquad i=0,1,...
\end{equation}
In the case where $ \sigma= \infty$ the moments are called Markov
parameters and are given by $$\eta_i = CA^iB.$$
\end{definition}

\noindent\textbf{Problem:} Given a full-order model (\ref{eq001}) and assume that the following parameters are given:
\begin{itemize}
\item Left interpolation points $\{\mu_i\}^m_{i=1} \subset \C$  \&  block tangent directions $\{L_i\}^m_{i=1} \subset \C^{p\times s}$.
\item Right interpolation points $\{\sigma_i\}^m_{i=1} \subset \C$ \&  block tangent directions $\{R_i\}^m_{i=1} \subset \C^{p \times s}$.
\end{itemize}
\vspace{0.2cm}
\noindent The main problem is to find a reduced-order model (\ref{eq003})
  such that the associated transfer function, $H_m$
in (\ref{eq004}) is a tangential interpolant to $H$ in (\ref{eq002}), i.e.

\begin{equation}\label{eq007}
 \left\{\begin{array}{lllll}
H_m(\sigma_i)R_i  & = & H(\sigma_i)R_i  \\
 & & & for \ i=1,...,m.\\
 L_i^TH_m(\mu_i)   & = & L_i^TH_(\mu_i)  \\
 \end{array}\right.
\end{equation}

\noindent The interpolation points and tangent directions are selected to realize the model
reduction goals described later.

\section{The  block tangential Lanczos-type method}\quad

\noindent  Let the original transfer function  $H(\omega) = C(\omega I-A)^{-1}B$ be expressed as 
$H(\omega)=CX=Y^TB$ where $X$ and $Y$ are such that,
\begin{equation}\label{eq11}
(\omega I_n-A)X =B, \;\; {\rm and }\;\; 
(\omega I_n-A)^TY =C^T.
\end{equation}
Given a system of matrices $\{V_1,\ldots,V_m\}$ and $\{W_1,\ldots,W_m\}$ where $V_i, \ W_i\in \R^{n\times s}$,  the approximate solution $X_m$ and $Y_m$ of $X$ and $Y$ are computed such that
\begin{equation}
\label{ex}
X_m^i\in Range\{V_1,...,V_m\} \ \ and \ \  Y_m^i\in Range\{W_1,...,W_m\}
\end{equation}
and
\begin{equation}
\label{er}
R_B^i(\omega)\bot\ Range\left\{W_1,...,W_m\right\},\,\,i=1,...,p
\end{equation}
\begin{equation}
\label{er1}
R_C^i(\omega) \bot\ Range\left\{V_1,...,V_m\right\},\,\,i=1,...,p
\end{equation}
where $X_m^i$, $Y_m^i$, $R_B^i$ and  $R_C^i$ are  the  $i$-th columns of $X_m$, $Y_m$, $R_B=B-(\omega I_n-A)X_m$ and  $R_C=C^T-(\omega I_n-A)^TY_m$, respectively. 
If we set $\mathbb{V}_m=[V_1,\ldots,V_m]$ and  $\mathbb{W}_m=[W_1,\ldots,W_m]$, then from \eqref{ex}, \eqref{er} and \eqref{er1} , we obtain 
$$X_m=\mathbb{V}_m(\omega I_{ms}-A_m)^{-1}\mathbb{W}_m^TB, $$
$$Y_m=\mathbb{W}_m(\omega I_{ms}-A_m)^{-T}\mathbb{V}_m^TC^T, $$
which gives the following approximate transfer function
$$H_m(\omega) =C_m(\omega I_{ms}-A_m)^{-1}B_m,$$
where $A_m=\mathbb{W}_m^TA\mathbb{V}_m$, $B_m=\mathbb{W}_m^TB$ and $C_m=C\mathbb{V}_m$. 
The matrices $\mathbb{V}_m=[V_1,...,V_m]$ and $\mathbb{W}_m=[W_1,...,W_m]$ are bi-orthonormal, where the $V_i, \ W_i\in \R^{n\times s}$. 
Notice that the residuals can be expressed as 
 \begin{equation}\label{eq12}
 R_B(\omega)=B-(\omega I_n -A)\mathbb{V}_m(\omega I_{ms}-A_m)^{-1}\mathbb{W}_m^TB.
 \end{equation}
\begin{equation}\label{eq13}
 R_C(\omega)=C^T-(\omega I_n -A)^T\mathbb{W}_m(\omega I_{ms}-A_m)^{-T}\mathbb{V}_m^TC^T.
 \end{equation}

\subsection{Tangential block Lanczos-type algorithm}\quad

\noindent This algorithm consists in constructing two bi-orthonormal bases,  spanned by the columns of $\{V_1,V_2,\ldots,V_m\}$ and $\{W_1,W_2,\ldots,W_m\}$,  of the following block tangential Krylov subspaces  \begin{equation}\label{eq019}
\mathcal{K}_m(A,B)=Range\{(\sigma_1I_n-A)^{-1}BR_1,...,(\sigma_mI_n-A)^{-1}BR_m\},
\end{equation}
and
\begin{equation}\label{eq0190}
\widetilde{\mathcal{K}}_m(A^T,C^T)=Range\{(\mu_1I_n-A)^{-T}C^TL_1,...,(\mu_mI_n-A)^{-T}C^TL_m\},
\end{equation}
where $\{\sigma_i\}_{i=1}^{m}$ and $\{\mu_i\}_{i=1}^{m}$ are the right and left interpolation points respectively and  $\{R_i\}_{i=1}^{m}$, $\{L_i\}_{i=1}^{m}$ are the right and left tangent directions with $R_i,\  L_i\in \R^{p\times s}$.  We present next the  Block Tangential Lanczos (BTL) algorithm that allows us to construct such bases. 
It is summarized in the following steps.

\begin{algorithm}[h!]
\caption{ Block Tangential  Lanczos (BTL) algorithm }\label{BTL}
		-- Inputs: A, B, C, $\sigma = \{\sigma_i\}_{i=1}^{m+1}$, $\mu = \{\mu_i\}_{i=1}^{m+1}$,  $R = \{R_i\}_{i=1}^{m+1}$, $L = \{L_i\}_{i=1}^{m+1}$,  $R_i,\ L_i\in \R^{p\times s}$.\\
		-- Output: $\mathbb{V}_{m+1} = \left[V_1,...,V_{m+1}\right],$ $\mathbb{W}_{m+1} = \left[W_1,...,W_{m+1}\right].$
\begin{itemize}
\item Compute $(\sigma_1 I_n-A)^{-1}BR_1 = H_{1,0}V_1$ and $(\mu_{1}I_n-A)^{-T}C^TL_{1} = F_{1,0}W_1$ such that $W_1^TV_1=I_p$.
\item Initialize: $\mathbb{V}_1= [V_1]$, $\mathbb{W}_1= [W_1]$.
\item For j = 1,...,m
\begin{enumerate}
\item If \quad$\sigma_{j+1} \ne  \infty$, $\widetilde{V}_{j+1}=(\sigma_{j+1}I_n-A)^{-1}BR_{j+1}$,  else $\widetilde{V}_{j+1}= AB R_{j+1}$.
\item If \quad$\mu_{j+1} \ne  \infty$, $\widetilde{W}_{j+1}=(\mu_{j+1}I_n-A)^{-T}C^TL_{j+1}$,  else $\widetilde{W}_{j+1}= AC^TL_{j+1}$.
\item   For i = 1,...,j
\begin{itemize}
\item  $H_{i,j} = W_i^T\widetilde{V}_{j+1}$,\qquad \qquad  \textbf{--} $F_{i,j} = V_i^T\widetilde{W}_{j+1}$,
\item  $\widetilde{V}_{j+1} = \widetilde{V}_{j+1} - V_i H_{i,j}$, \quad \;\textbf{--} $\widetilde{W}_{j+1} = \widetilde{W}_{j+1} - W_i F_{i,j}$,
\end{itemize}
\item End.
\item $\widetilde{V}_{j+1}=V_{j+1}H_{j+1,j}$,\ $\widetilde{W}_{j+1}=W_{j+1}F_{j+1,j}$, \quad (QR decomposition).
\item $W_{j+1}^TV_{j+1}=P_{j+1}D_{j+1}Q_{j+1}^T$, \quad (Singular Value Decomposition).
\item $V_{j+1}=V_{j+1}Q_{j+1}D_{j+1}^{-1/2}$, \  $W_{j+1}=W_{j+1}P_{j+1}D_{j+1}^{-1/2}$.
\item $H_{j+1,j}=D_{j+1}^{1/2}Q_{j+1}^TH_{j+1,j}$, \ $F_{j+1,j}=D_{j+1}^{1/2}P_{j+1}^TF_{j+1,j}$.
\item  $\mathbb{V}_{j+1}=\left[\mathbb{V}_j , \; V_{j+1}\right]$,\qquad $\mathbb{W}_{j+1}=\left[\mathbb{W}_j , \; W_{j+1}\right]$.
\end{enumerate}
\item End
\end{itemize}
\end{algorithm}

\noindent Let $\mathbb{V}_m=[V_1,V_2,\ldots,V_m]$ and $\mathbb{W}_m=[W_1,W_2,\ldots,W_m]$. Then, we should have the bi-orthogonality  conditions for $i,j=1,\ldots,m$:
\begin{equation}\label{eq020-}
\left\{\begin{array}{llll}
W_i^TV_j=I,  & i=j,\\
W_i^TV_j=0, & i\neq j.\\
 \end{array}\right. 
\end{equation} 
\noindent Here we suppose that we already have the set of  interpolation points $\sigma = \{\sigma_i\}_{i=1}^{m+1}$, $\mu = \{\mu_i\}_{i=1}^{m+1}$ and the tangential matrix directions $R = \{R_i\}_{i=1}^{m+1}$ and  $L = \{L_i\}_{i=1}^{m+1}$.  The upper block upper Hessenberg matrices $\widetilde{\mathbb{H}}_m =\left[\widetilde{\mathbb{H}}^{(1)},...,\widetilde{\mathbb{H}}^{(m)} \right]$ and $ \widetilde{\mathbb{F}}_m =\left[\widetilde{\mathbb{F}}^{(1)},...,\widetilde{\mathbb{F}}^{(m)} \right]\in \R^{(m+1)s\times ms}$ are obtained from the BTL algorithm, with 
$$
\widetilde{\mathbb{H}}^{(j)}=\left[\begin{array}{cccc}
	H_{1,j} \\
	\vdots  \\
	H_{j,j} \\
	H_{j+1,j} \\
	\textbf{0}
	\end{array}\right]\, {\rm and} \ \  \widetilde{\mathbb{F}}^{(j)}=\left[\begin{array}{cccc}
	F_{1,j} \\
	\vdots  \\
	F_{j,j} \\
	F_{j+1,j} \\
	\textbf{0}
	\end{array}\right],\quad {\rm for} \ j=0,...,m.
$$
The matrices $H_{i,j}$ and  $F_{i,j}$ constructed in Step 3 of Algorithm \ref{BTL}  are of size $s\times s$ and $\textbf{0}$ is the zero matrix of size $(m-j)s\times s$. We define also the following matrices, 
$$
\widetilde{D}_{m+1}^{(1)}=D_{m+1}^{(1)}\otimes I_s
,\qquad \widetilde{D}_{m+1}^{(2)}= D_{m+1}^{(2)}\otimes I_s
$$
where $D_{m+1}^{(1)}=Diag\{\sigma_1,...,\sigma_{m+1}\}$ and $D_{m+1}^{(2)}=Diag\{\mu_1,...,\mu_{m+1}\}$. With all those notations, we have the following theorem.
\medskip
\begin{theorem}\label{prop1}
Let $\mathbb{V}_{m+1}$ and $\mathbb{W}_{m+1}$ be the bi-orthonormal matrices of $\R^{n\times (m+1)s}$ constructed by 
Algorithm \ref{BTL}. Then we have the following relations 
\begin{equation}\label{eq15}
A\mathbb{V}_{m+1} = \left[\mathbb{V}_{m+1}\mathbb{G}_{m+1}\widetilde{D}_{m+1}^{(1)}-B\mathbb{R}_{m+1}\right]\mathbb{G}_{m+1}^{-1},
\end{equation}
and 
\begin{equation}\label{eq150}
A^T\mathbb{W}_{m+1} = \left[\mathbb{W}_{m+1}\mathbb{Q}_{m+1}\widetilde{D}_{m+1}^{(2)}-C^T\mathbb{L}_{m+1}\right]\mathbb{Q}_{m+1}^{-1}.
\end{equation}
Let $\mathbb{T}_{m+1}$ and $\mathbb{Y}_{m+1}$ be the matrices,
$$ \mathbb{T}_{m+1} = \left[(\sigma_1I-A )^{-1}BR_1,...,(\sigma_{m+1}I-A)^{-1}BR_{m+1} \right] \  {\rm and}$$   $$ \mathbb{Y}_{m+1} = \left[ (\mu_1I-A )^{-T}C^TL_1,...,(\mu_{m+1}I-A)^{-T}C^TL_{m+1} \right],$$ then we have
\begin{equation}\label{eq17}
\mathbb{T}_{m+1}=\mathbb{V}_{m+1}\mathbb{G}_{m+1}\ \ {\rm and } \quad \mathbb{Y}_{m+1}=\mathbb{W}_{m+1}\mathbb{Q}_{m+1},
\end{equation}
where  $ \mathbb{R}_{m+1}=\left[R_1,...,R_{m+1}\right]$ and $ \mathbb{L}_{m+1}=\left[L_1,...,L_{m+1}\right]$. The matrices 
 $\mathbb{G}_{m+1}=\left[\widetilde{\mathbb{H}}^{(0)}\ \  \widetilde{\mathbb{H}}_{m}\right]$ and $\mathbb{Q}_{m+1}=\left[\widetilde{\mathbb{F}}^{(0)}\ \  \widetilde{\mathbb{F}}_{m}\right]$ are block upper triangular matrices of sizes $(m+1)s\times (m+1)s$ and are assumed to be non-singular.
\end{theorem}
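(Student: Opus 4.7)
My strategy is to establish \eqref{eq17} first, purely from the bookkeeping of Algorithm \ref{BTL}, and then deduce \eqref{eq15} and \eqref{eq150} from a simple algebraic rearrangement of the defining equation $(\sigma_j I_n-A)^{-1}BR_j$.

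\emph{Step 1: Extract the recurrence from the algorithm.} I would inspect one iteration of Algorithm \ref{BTL} and verify that the three cosmetic post-processing operations (Gram--Schmidt, QR, bi-orthogonalising SVD) leave the invariant
\[
(\sigma_{j+1}I_n - A)^{-1}BR_{j+1} \;=\; \sum_{i=1}^{j+1} V_i\, H_{i,j}
\]
intact. The Gram--Schmidt step writes $\widetilde V_{j+1}=(\sigma_{j+1}I_n-A)^{-1}BR_{j+1}-\sum_{i\le j}V_iH_{i,j}$; the QR step factors $\widetilde V_{j+1}=V_{j+1}^{(1)}H_{j+1,j}^{(1)}$; and in the SVD step the replacements $V_{j+1}=V_{j+1}^{(1)}Q_{j+1}D_{j+1}^{-1/2}$ and $H_{j+1,j}=D_{j+1}^{1/2}Q_{j+1}^TH_{j+1,j}^{(1)}$ satisfy $V_{j+1}H_{j+1,j}=V_{j+1}^{(1)}H_{j+1,j}^{(1)}$ (the factor $Q_{j+1}Q_{j+1}^T$ cancels). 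Stacking the invariant for $j=0,1,\dots,m$ and reading the $(j+1)$-th block column of each side gives exactly $\mathbb T_{m+1}=\mathbb V_{m+1}\mathbb G_{m+1}$. The $\mathbb W$ identity is identical with $F_{i,j}$, $\widetilde{\mathbb F}^{(j)}$ and $\mu_{j+1}$ in place of the corresponding $H$, $\widetilde{\mathbb H}^{(j)}$ and $\sigma_{j+1}$ quantities.

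\emph{Step 2: Derive the $A$-recurrence on columns of $\mathbb T_{m+1}$.} From $(\sigma_j I_n-A)(\sigma_j I_n-A)^{-1}BR_j=BR_j$ one immediately obtains, block-column-wise,
\[
A\,(\sigma_j I_n-A)^{-1}BR_j \;=\; \sigma_j\,(\sigma_j I_n-A)^{-1}BR_j - BR_j, \quad j=1,\dots,m+1.
\]
Assembling these into a single $n\times (m+1)s$ identity, and using that $\widetilde D_{m+1}^{(1)}=D_{m+1}^{(1)}\otimes I_s$ acts on block columns by multiplication by $\sigma_j I_s$, I obtain
\[
A\,\mathbb T_{m+1} \;=\; \mathbb T_{m+1}\,\widetilde D_{m+1}^{(1)} - B\,\mathbb R_{m+1}.
\]

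\emph{Step 3: Eliminate $\mathbb T_{m+1}$ in favour of $\mathbb V_{m+1}$.} Substituting $\mathbb T_{m+1}=\mathbb V_{m+1}\mathbb G_{m+1}$ from Step 1 and post-multiplying by $\mathbb G_{m+1}^{-1}$ (allowed by the non-singularity hypothesis) yields
\[
A\,\mathbb V_{m+1} \;=\; \bigl[\mathbb V_{m+1}\mathbb G_{m+1}\widetilde D_{m+1}^{(1)} - B\,\mathbb R_{m+1}\bigr]\mathbb G_{m+1}^{-1},
\]
which is \eqref{eq15}. Relation \eqref{eq150} follows by applying the whole argument to $A^T$, $C^T$, $\mathbb W_{m+1}$, $\mathbb Q_{m+1}$, $\widetilde D_{m+1}^{(2)}$, $\mathbb L_{m+1}$ in place of their right-side counterparts.

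\emph{Main obstacle.} Conceptually the proof is a direct unwinding of the algorithm; the only delicate point is Step 1, namely checking that the post-QR bi-orthogonalising SVD update does not destroy the block-Arnoldi-type invariant that relates $\widetilde V_{j+1}$ before normalisation to $V_{j+1}H_{j+1,j}$ after normalisation. Once that bookkeeping is done, Steps 2 and 3 are one-line algebraic manipulations; the non-singularity of $\mathbb G_{m+1}$ and $\mathbb Q_{m+1}$ (equivalent to each diagonal block $H_{j+1,j}$, $F_{j+1,j}$ being invertible, i.e.\ no breakdown of the block Lanczos process) is an assumption of the theorem and need not be proved here.
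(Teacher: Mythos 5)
Your proof is correct and follows essentially the same route as the paper's: both rest on the algorithm's recurrence $\sum_{i=1}^{j+1}V_iH_{i,j}=(\sigma_{j+1}I_n-A)^{-1}BR_{j+1}$ together with the resolvent identity $A(\sigma I-A)^{-1}=\sigma(\sigma I-A)^{-1}-I$. The only (harmless) differences are of ordering and emphasis: you establish \eqref{eq17} first and deduce \eqref{eq15} from it, whereas the paper derives \eqref{eq15} directly from the recurrence and proves \eqref{eq17} separately, and your explicit check that the bi-orthogonalising SVD rescaling preserves the product $V_{j+1}H_{j+1,j}$ is a detail the paper leaves implicit.
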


\medskip

\begin{proof}
\noindent From Algorithm \ref{BTL}, we have
\begin{equation}\label{eq18}
 V_{j+1}H_{j+1,j}=(\sigma_{j+1}I_n-A)^{-1}BR_{j+1}-\di{\sum_{i=1}^j V_{i}H_{i,j}} \qquad j=1,...,m,
\end{equation}
multiplying (\ref{eq18}) on the left by $(\sigma_{j+1}I_n-A)$ and re-arranging terms,  we get
$$
A \di{\sum_{i=1}^{j+1} V_{i}H_{i,j}}=\sigma_{j+1}\di{\sum_{i=1}^{j+1} V_{i}H_{i,j}}-BR_{j+1} \qquad j=1,...,m,
$$
which gives
$$
A\mathbb{V}_{j+1}\left[\begin{array}{ccc}
	H_{1,j} \\
	\vdots  \\
	H_{j,j} \\
	H_{j+1,j}
	\end{array}\right]=\sigma_{j+1}\mathbb{V}_{j+1}\left[\begin{array}{ccc}
	H_{1,j} \\
	\vdots  \\
	H_{j,j} \\
	H_{j+1,j}
	\end{array}\right]-BR_{j+1},\qquad j=1,\ldots,m,
$$
that  written as
\begin{equation}\label{eq19}
A\mathbb{V}_{m+1}\left[\begin{array}{ccc}
	H_{1,j} \\
	\vdots  \\
	H_{j,j} \\
	H_{j+1,j}\\
	\textbf{0}
	\end{array}\right]=\sigma_{j+1}\mathbb{V}_{j+1}\left[\begin{array}{ccc}
	H_{1,j} \\
	\vdots  \\
	H_{j,j} \\
	H_{j+1,j}\\
	\textbf{0}
	\end{array}\right]-BR_{j+1},\qquad j=1,\ldots,m,
\end{equation}
where $\textbf{0}$ is the zero matrix of size $(m-j)s\times s$. Then for $j=1,\ldots,m$, we have 
\begin{equation}\label{eq20}
A\mathbb{V}_{m+1}\widetilde{\mathbb{H}}^{(j)}=\sigma_{j+1}\mathbb{V}_{j+1}\widetilde{\mathbb{H}}^{(j)}-BR_{j+1},
\end{equation}
now, since $V_1H_{1,0}=(\sigma_{1}I_n-A)^{-1}BR_{1}$, we can deduce from (\ref{eq20}), the following expression
$$
A\mathbb{V}_{m+1}\left[\widetilde{\mathbb{H}}^{(0)},\widetilde{\mathbb{H}}^{(1)},...,\widetilde{\mathbb{H}}^{(m)} \right] = \mathbb{V}_{m+1}\left[\widetilde{\mathbb{H}}^{(0)},\widetilde{\mathbb{H}}^{(1)},...,\widetilde{\mathbb{H}}^{(m)} \right](D_{m+1}^{(1)}\otimes I_s)- B\mathbb{R}_{m+1},$$
which ends the proof of (\ref{eq15}). The same proof can be done for the relation (\ref{eq150}).\\

\noindent For the proof of  \eqref{eq17},  we first use  \eqref{eq18} to obtain 
$$
 \di{\sum_{i=1}^{j+1}V_{i}H_{i,j}}=(\sigma_{j+1}I_n-A)^{-1}BR_{j+1} \qquad j=1,\ldots,m,
$$
which gives
$$
\mathbb{V}_{m+1}\left[\begin{array}{ccc}
	H_{1,j} \\
	\vdots  \\
	H_{j,j} \\
	H_{j+1,j}\\
	\textbf{0}
	\end{array}\right]=(\sigma_{j+1}I_n-A)^{-1}BR_{j+1},\qquad j=1,\ldots,m,
$$
it follows that
$$
\mathbb{V}_{m+1}\left[\widetilde{\mathbb{H}}^{(0)},\widetilde{\mathbb{H}}^{(1)},...,\widetilde{\mathbb{H}}^{(m)} \right] = \left[(\sigma_{1}I_n-A)^{-1}BR_{1},...,(\sigma_{m+1}I_n-A)^{-1}BR_{m+1} \right],$$
which ends the proof  of the first relation of \eqref{eq17}. In the same manner,  we can prove the second relation.
\end{proof}

\medskip

\begin{theorem}\label{theo1}
Let $\sigma,\  \mu \in\C$ be such that $(\omega I-A)$ is invertible for $\omega=\sigma,\ \mu$. Let $\mathbb{V}_m=\left[V_1,...,V_m \right] $ and $\mathbb{W}_m=\left[W_1,...,W_m \right] $  have full-rank, where the $V_i,\ W_i\in \R^{n\times s}$. Let  $R=[r_1,...,r_s],\ L=[l_1,...,l_s] \in \R^{p\times s}$ be a chosen  tangential matrix directions.  Then, 
\begin{enumerate}
\item If $(\sigma I-A)^{-1}Br_i \in Range\{V_1,...,V_m\}$ for $i=1,...,s$, then 
$$H_m(\sigma)R= H(\sigma)R.$$
\item If $(\mu I-A)^{-T}C^Tl_i \in Range\{W_1,...,W_m\}$ for $i=1,...,s$, then 
$$L^TH_m(\sigma)= L^TH(\sigma).$$
\item If both (1) and (2) hold and  in addition we have $\sigma=\mu$, then, $$L^TH_m'(\sigma)R = L^TH'(\sigma)R.$$
\end{enumerate}
\end{theorem}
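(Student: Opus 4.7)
The plan is to exploit the range conditions by turning each hypothesis into a matrix factorization through $\mathbb{V}_m$ or $\mathbb{W}_m$, and then use the bi-orthogonality $\mathbb{W}_m^T\mathbb{V}_m = I_{ms}$ to show that the relevant quantities computed with the reduced operators $A_m, B_m, C_m$ collapse back to their full-order analogues.

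For part (1), I would observe that the hypothesis $(\sigma I - A)^{-1} B r_i \in \mathrm{Range}(\mathbb{V}_m)$ for $i=1,\ldots,s$ is equivalent to the existence of a matrix $Z_R \in \mathbb{R}^{ms\times s}$ with $(\sigma I - A)^{-1} BR = \mathbb{V}_m Z_R$. Multiplying on the left by $(\sigma I - A)$ and then by $\mathbb{W}_m^T$, and invoking bi-orthogonality, yields $(\sigma I_{ms} - A_m) Z_R = \mathbb{W}_m^T B R = B_m R$, so $Z_R = (\sigma I_{ms} - A_m)^{-1} B_m R$. Then $H_m(\sigma) R = C_m Z_R = C \mathbb{V}_m Z_R = C(\sigma I - A)^{-1} B R = H(\sigma) R$, as desired.

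Part (2) is the transposed mirror of part (1). I would write the hypothesis as $(\mu I - A)^{-T} C^T L = \mathbb{W}_m Z_L$ for some $Z_L \in \mathbb{R}^{ms\times s}$. Transposing and multiplying by $\mathbb{V}_m$ on the right yields $L^T C \mathbb{V}_m = Z_L^T \mathbb{W}_m^T (\mu I - A) \mathbb{V}_m = Z_L^T (\mu I_{ms} - A_m)$, so $Z_L^T = L^T C_m (\mu I_{ms} - A_m)^{-1}$. Therefore $L^T H_m(\mu) = Z_L^T B_m = Z_L^T \mathbb{W}_m^T B = L^T C (\mu I - A)^{-1} B = L^T H(\mu)$.

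For the bi-tangential derivative statement in part (3), I would use $H'(\omega) = -C(\omega I - A)^{-2} B$ and $H_m'(\omega) = -C_m (\omega I_{ms} - A_m)^{-2} B_m$. With $\sigma = \mu$, both factorizations from parts (1) and (2) are available simultaneously, and I can split $(\sigma I - A)^{-2}$ as $(\sigma I - A)^{-T}$ times $(\sigma I - A)^{-1}$. This gives
\begin{equation*}
L^T H'(\sigma) R = -\bigl((\sigma I - A)^{-T} C^T L\bigr)^T (\sigma I - A)^{-1} B R = -(\mathbb{W}_m Z_L)^T (\mathbb{V}_m Z_R) = -Z_L^T Z_R,
\end{equation*}
where the last equality uses the bi-orthogonality. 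On the reduced side, substituting the expressions for $Z_R$ and $Z_L^T$ obtained in parts (1)--(2) gives $Z_L^T Z_R = L^T C_m (\sigma I_{ms} - A_m)^{-2} B_m R = -L^T H_m'(\sigma) R$, so the two derivative expressions coincide.

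The only step requiring care is the consistent use of bi-orthogonality to dissolve the projector $\mathbb{V}_m \mathbb{W}_m^T$ (or its dual) that would otherwise sit between the two resolvents; everything else is bookkeeping. The computations are linear and short, so I expect no genuine obstacle beyond tracking transposes.
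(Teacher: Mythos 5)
Your proof is correct, but it takes a genuinely different route from the paper's. You give a direct Petrov--Galerkin argument: the range hypothesis is converted into a factorization $(\sigma I-A)^{-1}BR=\mathbb{V}_mZ_R$, and applying $\mathbb{W}_m^T(\sigma I-A)$ together with the bi-orthogonality $\mathbb{W}_m^T\mathbb{V}_m=I_{ms}$ identifies $Z_R$ with the reduced-order solve $(\sigma I_{ms}-A_m)^{-1}B_mR$; the bitangential derivative condition then drops out of the one-line identity $L^TH'(\sigma)R=-Z_L^T\mathbb{W}_m^T\mathbb{V}_mZ_R=-Z_L^TZ_R=L^TH_m'(\sigma)R$. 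The paper instead introduces the oblique projectors $\mathcal{P}_m(\omega)$ and $\mathcal{Q}_m(\omega)$ and the error factorization $H(\omega)-H_m(\omega)=C(\omega I-A)^{-1}(I-\mathcal{Q}_m(\omega))(\omega I-A)(I-\mathcal{P}_m(\omega))(\omega I-A)^{-1}B$, from which assertions (1) and (2) follow because the rightmost factor annihilates $r_i$ (resp.\ the leftmost is annihilated by $l_j^T$); for assertion (3) it expands at $\sigma+\varepsilon$, shows the bitangential error is $O(\varepsilon^2)$, and passes to the limit of difference quotients. Your version is more elementary and dispenses with the limit argument entirely, while the paper's factorization yields a global expression for $H-H_m$ valid for all $\omega$ near $\sigma$, which is reusable for residual analysis and extends more readily to higher-order Hermite conditions. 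One caveat common to both arguments: you invoke $\mathbb{W}_m^T\mathbb{V}_m=I_{ms}$, which is not among the stated hypotheses of the theorem (only full rank is assumed) but is guaranteed by the BTL construction; the paper's proof needs the same property for $\mathcal{P}_m(\omega)$ to be a projector, so this is not a defect specific to your write-up, though it is worth stating explicitly.
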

\vspace{0.2cm}
\begin{proof} \\

\noindent 1) We  follow the same techniques as  those given in \cite{B0041} for the non-block case. Define
$$ \mathcal{P}_m(\omega) = \mathbb{V}_m(\omega I_m - A_m)^{-1}\mathbb{W}_m^{T}(\omega I- A), $$
and
$$ \mathcal{Q}_m(\omega) = (\omega I - A)\mathcal{P}_m(\omega)(\omega I - A)^{-1}=(sI-A)\mathbb{V}_m(\omega I_m-A_m)^{-1}\mathbb{W}_m^{T}. $$
It is easy to verify that $\mathcal{P}_m(\omega)$ and $\mathcal{Q}_m(\omega)$ are projectors. Moreover, for all $\omega$ in a neighborhood of $\sigma$ we have  $$\mathcal{V}_m =Range\{V_1,...,V_m\}=Range(\mathcal{P}_m(\omega)) = Range(I -\mathcal{P}_m(\omega)),$$
and 
$$\mathcal{W}_m^{\bot} =Range\{W_1,...,W_m\}^{\bot}=Ker(\mathcal{Q}_m(\omega)) = Ker(I -\mathcal{Q}_m(\omega)).$$
 Observe that
\begin{equation}\label{eq13}
H(\omega) - H_m(\omega) = C(\omega I - A)^{-1}(I -\mathcal{Q}_m(\omega))(\omega I -A)(I -\mathcal{P}_m(\omega ))(\omega I-A)^{-1}B.
\end{equation}
Evaluating the expression \eqref{eq13} at $\omega = \sigma$ and multiplying by $r_i$ from the right, yields the first assertion, and evaluating the same expression  at $\omega = \mu$ and multiplying by $l_i^T$ from the left, yields the second assertion \\

\noindent 2) Now if both (1) and (2) hold and   $\sigma=\mu$,  notice that 
$$((\sigma+\varepsilon)I-  A)^{-1} = (\sigma I-A)^{-1}-\varepsilon(\sigma I- A)^{-2}+O(\varepsilon^2),$$
and
$$((\sigma+\varepsilon)I_m - A_m)^{-1} = (\sigma I_m-A_m)^{-1}-\varepsilon(\sigma I_m- A_m)^{-2}+O(\varepsilon^2).$$
Therefore, evaluating (\ref{eq13}) at $s = \sigma + \varepsilon$, multiplying by $l_j^T$ and $r_i$,  from the left and the right respectively, for $i,j=1,...,s$,  we get
$$ l_j^T H(\sigma + \varepsilon)r_i -l_j^TH_m(\sigma + \varepsilon)r_i = O(\varepsilon^2).$$
Now notice that since $l_j^T H(\sigma)r_i = l_j^T H_m(\sigma)r_i$, we have 
$$\displaystyle \lim_{\varepsilon\longrightarrow 0}\, \left [ \frac{1}{\varepsilon}( l_j^T H(\sigma+\varepsilon)r_i -l_j^T H(\sigma)r_i)-\frac{1}{\varepsilon}(l_j^T H_m(\sigma + \varepsilon)r_i -l_j^TH_m(\sigma)r_i)\right ] = 0,$$
 which proves the third assertion. 
\end{proof}\\

\noindent In the following theorem, we give the exact expression of the residual norms in a simplified and economical  computational form. 

 
\section{An adaptive choice of the interpolation points and tangent directions}\quad 

 \noindent In the section, we will see how to chose  the interpolation points $\{\sigma_i\}_{i=1}^m$, $\{\mu_i\}_{i=1}^m$ and tangential  directions $\{R_i\}_{i=1}^m$, $\{L_i\}_{i=1}^m$,  where  $R_i,\ L_i\in \R^{p\times s}$. In this paper we adopted the adaptive approach, inspired  by the work in \cite{B008}. For this approach, we  extend our subspaces $\mathcal{K}_m(A,B)$ and 
 $\widetilde{\mathcal{K}}_m(A^T,C^T)$ 
by adding  new blocks $\widetilde{V}_{m+1}$ and $\widetilde{W}_{m+1}$ defined as follows
\begin{equation}\label{eq24}
\widetilde{V}_{m+1}=(\sigma_{m+1}I_n-A)^{-1}BR_{m+1}\, {\rm and}\, 
\widetilde{W}_{m+1}=(\sigma_{m+1}I_n-A)^{-T}C^TL_{m+1},
\end{equation}
where the  new interpolation point $\sigma_{m+1}$, $\mu_{m+1}$ and the new tangent direction $R_{m+1}$, $L_{m+1}$
are computed  as follows
\begin{equation}\label{eq25}
(R_{m+1},\sigma_{m+1}) = arg\di\max_{\tiny{\left.\begin{array}{ccc}
\omega \in S_m \\
R\in \R^{p\times s}\\
\|R\|_2=1
 \end{array}\right.}}
 \left\|R_B(\omega) R\right\|{_2},
\end{equation}
\begin{equation}\label{eq250}
(L_{m+1},\mu_{m+1}) = arg\di\max_{\tiny{\left.\begin{array}{ccc}
\omega \in S_m \\
L\in \R^{p\times s}\\
\|L\|_2=1
 \end{array}\right.}}
 \left\|R_C(\omega) L\right\|{_2}.
\end{equation}
Here $S_m \subset \C^{+}$ is defined  as the convex hull of $\{-\lambda_1,...,-\lambda_m\}$ where $\{\lambda_i\}_{i=1}^m$ are  the eigenvalues of the matrix $A_m$. For solving  the problem \eqref{eq25}, we proceed  as follows. First we compute the next interpolation point,  by computing the norm of $R_B(\omega)$ for each $\omega$ in $S_m$, i.e we solve the following problem,
 \begin{equation}\label{eq26}
\left.\begin{array}{lll}
\sigma_{m+1} & = arg \di{\max_{\omega\in  S_m}}\|R_B(\omega)\|{_2}.
\end{array}\right.
\end{equation}
 Then the tangent direction $R_{m+1}$ is computed  by evaluating \eqref{eq25} at $\omega=\sigma_{m+1}$,
\begin{equation}\label{eq27}
R_{m+1} = arg \di\max_{\tiny{\left.\begin{array}{ccc}
R\in \R^{p\times s}\\
\|R\|_2=1
 \end{array}\right.}}\left\|R_B(\sigma_{m+1})R\right\|_2.
\end{equation}
\noindent We can easily prove that the  tangent matrix direction $R_{m+1}$ is given as  $$R_{m+1}=[r_{1}^{(m+1)},...,r_{s}^{(m+1)}],$$
 where the $r_{i}^{(m+1)}$'s are the right singular vectors corresponding to the $s$ largest singular values of the matrix $R_B(\sigma_{m+1})$. This approach of maximizing the residual norm, works efficiently for small to medium matrices, but cannot be used  for large scale systems. To overcome this problem, we give the following proposition.
 \medskip
\begin{proposition}\label{prop2}
Let $R_B(\omega)=B-(\omega I_n -A)\mathbb{V}_m U_m^B(\omega)$ and $R_C(\omega)=C^T-(\omega I_n -A)^T\mathbb{W}_m U_m^C(\omega)$ be the residuals  given in \eqref{eq12} and \eqref{eq13}, where $U_m^B(\omega)=(\omega I-A_m)^{-1}\mathbb{W}_m^TB$ and $U_m^C(\omega)=(\omega I-A_m)^{-T}\mathbb{V}_m^TC^T$. 
Then we have  the following  new expressions 
\begin{equation}
\label{eq21}
R_B(\omega) = (I_n-\mathbb{V}_m\mathbb{W}_m^T)B\left[I_p-\mathbb{ R}_m\mathbb{G}_m^{-1}U_m^B(\omega)\right],
\end{equation}
and
\begin{equation}
\label{eq210}
R_C(\omega) = ( I_n-\mathbb{W}_m\mathbb{V}_m^T)C^T\left[I_p-\mathbb{ L}_m\mathbb{Q}_m^{-1}U_m^C(\omega)\right].
\end{equation}
\end{proposition}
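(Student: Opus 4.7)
The plan is to eliminate the term $A\mathbb{V}_m$ in $R_B(\omega)=B-(\omega I_n-A)\mathbb{V}_m U_m^B(\omega)$ by means of an Arnoldi-type identity, and then to recognise the outcome as an oblique projection of $B$. Running the derivation of (\ref{eq15}) through only the first $m$ steps of Algorithm \ref{BTL} gives
\begin{equation*}
A\mathbb{V}_m = \mathbb{V}_m\mathbb{G}_m\widetilde{D}_m^{(1)}\mathbb{G}_m^{-1} - B\mathbb{R}_m\mathbb{G}_m^{-1},
\end{equation*}
where $\mathbb{G}_m$ is the leading $ms\times ms$ block of $\mathbb{G}_{m+1}$ (non-singular by the standing assumption), $\widetilde D_m^{(1)}=\mathrm{Diag}\{\sigma_1,\ldots,\sigma_m\}\otimes I_s$, and $\mathbb{R}_m=[R_1,\ldots,R_m]$.

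Applying $\mathbb{W}_m^T$ on the left and invoking the bi-orthonormality $\mathbb{W}_m^T\mathbb{V}_m=I_{ms}$ yields $A_m=\mathbb{G}_m\widetilde D_m^{(1)}\mathbb{G}_m^{-1}-\mathbb{W}_m^T B\,\mathbb{R}_m\mathbb{G}_m^{-1}$, or equivalently
\begin{equation*}
\omega I_{ms}-\mathbb{G}_m\widetilde D_m^{(1)}\mathbb{G}_m^{-1} = (\omega I_{ms}-A_m)-\mathbb{W}_m^T B\,\mathbb{R}_m\mathbb{G}_m^{-1}.
\end{equation*}
Plugging the first identity into $(\omega I_n-A)\mathbb{V}_m U_m^B(\omega)$, factoring $\mathbb{V}_m$ out on the left, using this second identity, and finally invoking the trivial collapse $(\omega I_{ms}-A_m)U_m^B(\omega)=\mathbb{W}_m^T B$ reduces the product to $\mathbb{V}_m\mathbb{W}_m^T B - \mathbb{V}_m\mathbb{W}_m^T B\,\mathbb{R}_m\mathbb{G}_m^{-1}U_m^B(\omega) + B\,\mathbb{R}_m\mathbb{G}_m^{-1}U_m^B(\omega)$. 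Subtracting from $B$ and factoring $(I_n-\mathbb{V}_m\mathbb{W}_m^T)B$ on the left delivers (\ref{eq21}).

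Equation (\ref{eq210}) is obtained by the identical manipulation applied to the transposed problem: start from $R_C(\omega)=C^T-(\omega I_n-A)^T\mathbb{W}_m U_m^C(\omega)$, replace $A^T\mathbb{W}_m$ via the analogue of (\ref{eq15}) involving $\mathbb{Q}_m$, $\widetilde D_m^{(2)}$ and $\mathbb{L}_m$ (that is, (\ref{eq150}) restricted to the first $m$ blocks), and use the corresponding collapse $(\omega I_{ms}-A_m^T)U_m^C(\omega)=\mathbb{V}_m^T C^T$.

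The main obstacle is essentially bookkeeping: verifying that the Arnoldi-type identity of Theorem \ref{prop1}, written at order $m+1$, restricts cleanly to the stated form at order $m$. This relies on the block upper-triangular structure of $\mathbb{G}_{m+1}$, so that its leading $ms\times ms$ block $\mathbb{G}_m$ is governed only by the first $m$ columns of the recurrence, together with the non-singularity hypothesis on $\mathbb{G}_m$. Once this technical point is secured, the remainder is algebraic rearrangement driven entirely by the three ingredients above: the Arnoldi identity, bi-orthonormality, and the definition of $U_m^B(\omega)$.
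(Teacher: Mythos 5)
Your proposal is correct and follows essentially the same route as the paper: both arguments hinge on restricting the Arnoldi-type relation \eqref{eq15} to order $m$, applying $\mathbb{W}_m^T$ with bi-orthonormality to express $A_m$ (equivalently, to get $A\mathbb{V}_m-\mathbb{V}_m A_m=(I_n-\mathbb{V}_m\mathbb{W}_m^T)B\mathbb{R}_m\mathbb{G}_m^{-1}$), and using the collapse $(\omega I_{ms}-A_m)U_m^B(\omega)=\mathbb{W}_m^TB$ before factoring out $(I_n-\mathbb{V}_m\mathbb{W}_m^T)B$. The only difference is the order of the algebraic substitutions, which is immaterial.
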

\begin{proof}\\
\noindent The residual $R_B(\omega)$ can be written as 
$$\left.\begin{array}{llll}
R_B(\omega) &=B- \omega\mathbb{V}_m U_m^B(\omega) + A\mathbb{V}_mU_m^B(\omega) \\[0.2cm]
        &= B+A\mathbb{V}_m U_m^B(\omega) - \mathbb{V}_m(\omega I_{ms}-A_m )(\omega I_{ms}-A_m)^{-1}\mathbb{W}_m ^{T}B \\[0.2cm]
        &\quad \quad  - \mathbb{V}_m A_m(\omega I_{ms}-A_m)^{-1}\mathbb{W}_m ^{T}B\\[0.2cm]
        &=B+A\mathbb{V}_m U_m^B(\omega) -\mathbb{V}_m\mathbb{W}_m^{T}B-\mathbb{V}_m A_m U_m^B(\omega) \\[0.2cm]
        &=(I_n-\mathbb{V}_m\mathbb{W}_m^{T})B+(A\mathbb{V}_m-\mathbb{V}_m A_m)U_m^B(\omega),
\end{array}\right.$$
using Equation \eqref{eq15}, we get 
$$A_m=\mathbb{W}_{m}^TA\mathbb{V}_{m} = \left[\mathbb{G}_{m}\widetilde{D}_{m}^{(1)}-B_m\mathbb{R}_{m}\right]\mathbb{G}_{m}^{-1},$$
which gives,
$$
A\mathbb{V}_m-\mathbb{V}_m A_m=\left[I-\mathbb{V}_m\mathbb{W}_m^{T}\right]B\mathbb{R}_m\mathbb{G}_m^{-1},
$$
which proves \eqref{eq21}. In the same way we can  prove \eqref{eq210}.
\end{proof}
 \medskip
 
\noindent The expression of $R_B(\omega)$ given in  \eqref{eq21}  allows us to significantly reduce the computational cost
while seeking the next pole and direction. In fact, applying the skinny QR decomposition $$(I_n-\mathbb{V}_m\mathbb{W}_m^T)B=QL,$$ we get the simplified residual norm 
\begin{equation}\label{eq29}
\left\|R_B(\omega)\right\|_2=\left\|L[I_p-\mathbb{ R}_m\mathbb{G}_m^{-1}U_m^B(\omega)]\right \|_2.
\end{equation}
This  means that, solving the problem \eqref{eq25} requires  only the computation  of matrices of size
$ms \times ms$ for each value of $\omega$. \\

\noindent The next algorithm, summarizes all the steps of the adaptive choice of tangent interpolation points and tangent directions. 
\begin{algorithm}
\caption{The Adaptive Block  Tangential  Lanczos (ABTL) algorithm}\label{ABTL}
\begin{itemize}
\item Given $A$, $B$, $C$, $m_{max}$. 
\item Outputs: $\mathbb{V}_{m_{max}}$, $\mathbb{W}_{m_{max}}$.
\item Compute $(\sigma_1 I_n-A)^{-1}BR_1 = H_{1,0}V_1$ and $(\mu_{1}I_n-A)^{-T}C^TL_{1} = F_{1,0}W_1$ such that $W_1^TV_1=I_p$.
\item Initialize: $\mathbb{V}_1= [V_1 ]$, $\mathbb{W}_1= [W_1]$.
\begin{enumerate}
\item For $m=1:m_{max}$   
\item Set $A_m = \mathbb{W}_m^{T}A\mathbb{V}_m$, $B_m = \mathbb{W}_m^{T}B$, $C_m = C\mathbb{V}_m$.
\item Compute $\sigma_{m+1}$, and $\mu_{m+1}$
\begin{itemize}
\item  Compute $\{\lambda_1,...,\lambda_m\}$ eigenvalues of $A_m$.
\item Determine $ S_m$, convex hull of $\{-\lambda_1,...,-\lambda_m\}$.
\item Solve (\ref{eq26}). The same for $\mu_{m+1}$.
\end{itemize}
 \item Compute right and left vectors $R_{m}$, $L_{m}$.
\item   $\widetilde{V}_m= (\sigma_{m+1}I_n-A)^{-1}BR_{m+1}$,\quad $\widetilde{W}_{m+1} = (\mu_{m+1}I_n-A)^{-T}C^TL_{m+1}$.
\item   For i = 1,...,m
\begin{itemize}
\item  $H_{i,m} = W_i^T\widetilde{V}_{m+1}$,\qquad \qquad  \textbf{--} $F_{i,m} = V_i^T\widetilde{W}_{m+1}$,
\item  $\widetilde{V}_{m+1} = \widetilde{V}_{m+1} - V_i H_{i,m}$, \quad \;\textbf{--} $\widetilde{W}_{m+1} = \widetilde{W}_{m+1} - W_i F_{i,m}$,
\end{itemize}
\item End.
\item $\widetilde{V}_{m+1}=V_{m+1}H_{m+1,m}$,\ $\widetilde{W}_{m+1}=W_{m+1}F_{m+1,m}$. \qquad (QR decomposition).
\item $W_{m+1}^TV_{m+1}=P_{m+1}D_{m+1}Q_{m+1}^T$. \qquad (Singular Value Decomposition).
\item $V_{m+1}=V_{m+1}Q_{m+1}D_{m+1}^{-1/2}$, \  $W_{m+1}=W_{m+1}P_{m+1}D_{m+1}^{-1/2}$.
\item $H_{m+1,m}=D_{m+1}^{1/2}Q_{m+1}^TH_{m+1,m}$, \ $F_{m+1,m}=D_{m+1}^{1/2}P_{m+1}^TF_{m+1,m}$.
\item  $\mathbb{V}_{m+1}=\left[\mathbb{V}_m , \; V_{m+1}\right]$,\qquad $\mathbb{W}_{m+1}=\left[\mathbb{W}_m , \; W_{m+1}\right]$.
\item End.
\end{enumerate}
\end{itemize}
\end{algorithm}

\section{Model Reduction of Second-Order Systems}\quad 

\noindent Linear PDEs modeling structures in many areas of engineering (plates, shells, beams ...) are often second order in time  see
for example \cite{B23,B24, B25}. The spatial semi-discretization of its models by a method
of finite elements leads to systems that write in the form:
\begin{equation}\label{eq6.01}
\left\{\begin{array}{lllll}
M\ddot{q}(t)+D\dot{q}(t) +Kq(t) & = & Bu(t)\\
  y(t)  & = & Cq(t),\\
 \end{array}\right.
\end{equation}
where $M\in \mathbb{R}^{n\times n} $ is the mass matrix, $D\in \mathbb{R}^{n\times n} $ is the damping matrix and $K\in \mathbb{R}^{n\times n} $ the stiffness matrix. When
the source term $Bu(t)$ is null, the system is said to be free, otherwise, it is said forced. If $D = 0$, the system  is said to be undamped. We assume that the mass matrix $M$ is invertible, then the system \eqref{eq6.01} can be written as 
\begin{equation}\label{eq6.02}
\left\{\begin{array}{lllll}
\ddot{q}(t)+D_M\dot{q}(t) +K_Mq(t) & = & B_Mu(t)\\
  y(t)  & = & Cq(t),\\
 \end{array}\right.
\end{equation}
where $D_M=M^{-1}D$, $K_M=M^{-1}K$ and $B_M=M^{-1}B$, for simplicity we still denote $K,\  D,$  and $B $ instead of $D_M$, $K_M$ and $B_M$. The transfer function associated with the system \eqref{eq6.02} is given by using the Laplace transform as:
\begin{equation}\label{eq6.03}
F(\omega) := C(\omega^2 I_n+\omega D+K)^{-1}B \in \R^{p\times p}.
\end{equation}
Usually, it's difficult to have the efficient solution of various control or simulation tasks because the original system is too large to allow it. In order to solve this problem, methods
that produce a reduced system of size $m \ll n$ that preserves the essential
properties of the full order model have been developed. The reduced model have the following form:
\begin{equation}\label{eq6.04}
\left\{\begin{array}{lllll}
\ddot{q}_m(t)+D_m\dot{q}_m(t) +K_mq_m(t) & = & B_mu(t)\\
  y_m(t)  & = & C_mq(t),\\
 \end{array}\right.
\end{equation}
where $D_m, \ K_m\in \mathbb{R}^{m\times m} $, $B_m,\ C_m^T\in \mathbb{R}^{m\times p} $  and $q_m(t)\in \mathbb{R}^{m}$. The transfer function associated to the system \eqref{eq6.04} is given by:
\begin{equation}\label{eq6.05}
F_m(\omega) := C_m(\omega^2 I_m+\omega D_m+K_m)^{-1}B_m \in \R^{p\times p}.
\end{equation}
Second-order systems \eqref{eq6.02} can be written as  a first order linear systems. In fact,
\begin{equation}\label{eq6.06}
\left\{\begin{array}{cclll}
\left[\begin{array}{ccc}
\dot{q}(t)\\
\ddot{q}(t)\\
 \end{array}\right] & = &\left[\begin{array}{cccc}
0 &  I_n\\
-K  & -D\\
 \end{array}\right]\left[\begin{array}{ccc}
q(t)\\
\dot{q}(t)\\
 \end{array}\right] +\left[\begin{array}{ccc}
0\\
B\\
 \end{array}\right]u(t)\\[0.3cm]
   y(t)  &  = & \left[\begin{array}{ccc}
C & 0\\
 \end{array}\right]\left[\begin{array}{ccc}
q(t)\\
\dot{q}(t)\\
 \end{array}\right], \end{array}\right.
\end{equation}
which is equivalent to
\begin{equation}\label{eq6.07}
 \left\{\begin{array}{lllll}
\dot{x}(t)  & = & \mathcal{A}x(t) +  \mathcal{B}u(t)\\
  y(t)  & = & \mathcal{C}x(t),\\
 \end{array}\right.
\end{equation}
with $x(t)=\left[\begin{array}{ccc}
q(t)\\
\dot{q}(t)\\
 \end{array}\right]$, $\mathcal{A}=\left[\begin{array}{ccc}
0  & I_n\\
-K & -D\\
 \end{array}\right]$,  $\mathcal{B}=\left[\begin{array}{ccc}
0 \\
B\\
 \end{array}\right]$ and $\mathcal{C}=\left[\begin{array}{ccc}
C  & 0\\
 \end{array}\right]$.\\[0.2cm]
Thus, the corresponding transfer function is defined as,
 \begin{equation}\label{eq6.08}
\mathcal{F}(\omega) := \mathcal{C}(\omega I_{2n}-\mathcal{A})^{-1}\mathcal{B} \in \R^{p\times p}.
\end{equation}
We note that $\mathcal{F}(\omega)=F(\omega)$. In fact, setting
$$ X= (\omega I_{2n}-\mathcal{A})^{-1}\mathcal{B}=\left[\begin{array}{ccc}
X_1 \\
X_2\\
 \end{array}\right],$$
 wich gives $ \mathcal{F}(\omega)=\mathcal{C}X$, where $X$ verifies $(\omega I_{2n}-\mathcal{A})X=\mathcal{B}$. Using the expressions of the matrices $\mathcal{A}$, $\mathcal{B}$ and $\mathcal{C}$, we get,
 $$
 (\omega^2 I_n+\omega D+K)X_1=B\ \text{and}\ \mathcal{F}(\omega)=CX_1.  
 $$
 Hence
 $$
 \mathcal{F}(\omega)=F(\omega)=C(\omega^2 I_n+\omega D+K)^{-1}B.
 $$
 We can reduce the second-order system \eqref{eq6.02} by
applying  linear model reduction technique presented in the previous section, to $(A,\ B,\ C)$  to
yield a small linear system $(A_m,\ B_m,\ C_m)$. Unfortunately, there is no guarantee
that the matrices defining the reduced system have the necessary structure
 to preserve the second-order form of the original system. For that we follow the model reduction techniques of second-order structure-preserving, presented in \cite{B26,B27,B28}.
 \subsection{The structure-preserving of the  second-order reduced model }\quad 

 \noindent Using the Krylov subspace-based methods discussed in the previous  section do not guaranty the
second-order structure when applied to the linear system \eqref{eq6.07}. the authors in \cite{B26,B28} proposed a result, that gives a simple sufficient condition to satisfy the interpolation condition and produce a second order reduced system.\\

\begin{lem}
Let $(\mathcal{A},\ \mathcal{B},\ \mathcal{C})$ be the state space realization defined in \eqref{eq6.07}. If we project the
state space realization with $2n \times 2ms$ bloc diagonal matrices
 $$\mathcal{V}_m=\left[\begin{array}{ccc}
\mathcal{V}_m^1  & 0\\
0 & \mathcal{V}_m^2\\
 \end{array}\right],\ \mathcal{W}_m=\left[\begin{array}{ccc}
\mathcal{W}_m^1  & 0\\
0 & \mathcal{W}_m^2\\
 \end{array}\right],\ \mathcal{W}_m^T\mathcal{V}_m=I_{2ms},$$
where $\mathcal{V}_m^1,\ \mathcal{V}_m^2,\ \mathcal{W}_m^1 \text{and} \ \mathcal{W}_m^2\in \R^{n\times ms}$, then the reduced transfer function
$$\mathcal{F}_m(\omega) := \mathcal{C}\mathcal{V}_m(\omega I_{2mp}-\mathcal{W}_m^T\mathcal{A}\mathcal{V}_m)^{-1}\mathcal{W}_m^T\mathcal{B},$$
is a second order transfer function, on condition that  the matrix $(\mathcal{W}_m^2)^T\mathcal{V}_m^1$ is invertible.
\end{lem}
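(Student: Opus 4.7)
The plan is to verify the second-order structure by direct block computation. First, exploiting the block-diagonal form of $\mathcal V_m,\mathcal W_m$ together with the bi-orthogonality $\mathcal W_m^T\mathcal V_m=I_{2ms}$, I would derive that $(\mathcal W_m^1)^T\mathcal V_m^1=(\mathcal W_m^2)^T\mathcal V_m^2=I_{ms}$, while the cross terms $(\mathcal W_m^1)^T\mathcal V_m^2$ and $(\mathcal W_m^2)^T\mathcal V_m^1$ are otherwise unconstrained. Substituting the explicit block expressions of $\mathcal A,\mathcal B,\mathcal C$ from \eqref{eq6.07} then yields
\begin{equation*}
\mathcal W_m^T\mathcal A\mathcal V_m=\begin{pmatrix} 0 & (\mathcal W_m^1)^T\mathcal V_m^2\\ -(\mathcal W_m^2)^T K\mathcal V_m^1 & -(\mathcal W_m^2)^T D\mathcal V_m^2\end{pmatrix},
\end{equation*}
while $\mathcal W_m^T\mathcal B$ has only its lower block $(\mathcal W_m^2)^TB$ nonzero and $\mathcal C\mathcal V_m$ has only its left block $C\mathcal V_m^1$ nonzero.

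Next, introducing the shorthands $E=(\mathcal W_m^1)^T\mathcal V_m^2$, $G=(\mathcal W_m^2)^T K\mathcal V_m^1$ and $H=(\mathcal W_m^2)^T D\mathcal V_m^2$, I would invert $\omega I_{2ms}-\mathcal W_m^T\mathcal A\mathcal V_m$ by a block LDU factorization. Because of the zero blocks in $\mathcal C\mathcal V_m$ and $\mathcal W_m^T\mathcal B$, only the $(1,2)$ block of this inverse contributes to $\mathcal F_m(\omega)$; using the Schur complement of the $(1,1)$ block and then multiplying numerator and denominator by $\omega$, this block simplifies to $E\bigl(\omega^2 I_{ms}+\omega H+GE\bigr)^{-1}$. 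Consequently
\begin{equation*}
\mathcal F_m(\omega)=C\mathcal V_m^1\,E\,\bigl[\omega^2 I_{ms}+\omega H+GE\bigr]^{-1}(\mathcal W_m^2)^T B,
\end{equation*}
which has precisely the rational form of a second-order transfer function as in \eqref{eq6.05}.

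Finally, the invertibility hypothesis enters when I wish to identify the realization above with a canonical second-order model whose internal variables are $q_m$ and $\dot q_m$ in the sense of \eqref{eq6.04}. To this end I would apply a block-diagonal similarity transformation $T=\mathrm{diag}(T_1,T_2)$ to the projected first-order realization and choose $T_1,T_2$ so that the transformed $(1,2)$ block becomes $I_{ms}$, thereby putting the state matrix in the companion form associated with a genuine second-order system; solvability of this algebraic step is exactly where invertibility of the cross term stated in the hypothesis is used, because otherwise the map from the second block of the projected state to $\dot q_m$ cannot be inverted. The main obstacle I anticipate is the Schur-complement bookkeeping: simplifying the $(1,2)$ block of the inverse without introducing a spurious factor of $E$ on the right is delicate, and one must clear the factor $\omega$ at the correct moment to expose the quadratic dependence $\omega^2 I_{ms}+\omega H+GE$ that makes the second-order structure visible.
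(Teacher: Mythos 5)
The paper offers no proof of this lemma (it is imported from the cited references \cite{B26,B28}), so there is no in-paper argument to compare yours against; judged on its own, your block computation is correct and is the natural one. The diagonal structure of $\mathcal{V}_m,\mathcal{W}_m$ does give $(\mathcal{W}_m^1)^T\mathcal{V}_m^1=(\mathcal{W}_m^2)^T\mathcal{V}_m^2=I_{ms}$, the projected matrices are exactly as you write them, and the Schur-complement evaluation of the $(1,2)$ block of $(\omega I-\mathcal{W}_m^T\mathcal{A}\mathcal{V}_m)^{-1}$ correctly yields $E\bigl(\omega^2I_{ms}+\omega H+GE\bigr)^{-1}$ with $E=(\mathcal{W}_m^1)^T\mathcal{V}_m^2$, $G=(\mathcal{W}_m^2)^TK\mathcal{V}_m^1$, $H=(\mathcal{W}_m^2)^TD\mathcal{V}_m^2$, whence the displayed second-order form of $\mathcal{F}_m$.

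The gap is in your final paragraph. The matrix that must be inverted to bring the projected realization to companion form $\left[\begin{smallmatrix}0&I\\-K_m&-D_m\end{smallmatrix}\right]$ by a block-diagonal similarity is the $(1,2)$ block of $\mathcal{W}_m^T\mathcal{A}\mathcal{V}_m$, namely $E=(\mathcal{W}_m^1)^T\mathcal{V}_m^2$, whereas the hypothesis of the lemma names the \emph{other} cross term $(\mathcal{W}_m^2)^T\mathcal{V}_m^1$. That matrix appears nowhere in $\mathcal{W}_m^T\mathcal{A}\mathcal{V}_m$, $\mathcal{W}_m^T\mathcal{B}$ or $\mathcal{C}\mathcal{V}_m$, and the bi-orthogonality $\mathcal{W}_m^T\mathcal{V}_m=I_{2ms}$ constrains only the diagonal blocks, so it establishes no link between the two cross terms; the stated hypothesis therefore does not deliver the invertibility your last step consumes, and the claim that it is ``exactly'' what is used there is false. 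To close the argument you should either (i) perform the similarity explicitly with $T=\mathrm{diag}(E,I_{ms})$ --- which requires $E$ invertible and gives $K_m=EG$, $D_m=EHE^{-1}$, $B_m=E(\mathcal{W}_m^2)^TB$, $C_m=C\mathcal{V}_m^1$ --- and note that with the state ordering $x=(q;\dot q)$ of \eqref{eq6.06} the correct hypothesis is invertibility of $(\mathcal{W}_m^1)^T\mathcal{V}_m^2$ (the condition on $(\mathcal{W}_m^2)^T\mathcal{V}_m^1$ matches the reversed ordering $x=(\dot q;q)$ used in the sources the lemma is taken from), or (ii) settle for the weaker conclusion that $\mathcal{F}_m$ has the rational form \eqref{eq6.05}, which your Schur-complement identity already proves with no invertibility assumption at all.
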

\begin{theorem}
Let $\mathcal{F}(\omega) := \mathcal{C}(\omega I_{2n}-\mathcal{A})^{-1}\mathcal{B}= C(\omega^2 I_n+\omega D+K)^{-1}B$, with
$$\mathcal{A}=\left[\begin{array}{ccc}
0  & I_n\\
-K & -D\\
 \end{array}\right],\ \mathcal{B}=\left[\begin{array}{ccc}
0 \\
B\\
 \end{array}\right]\ and \ \mathcal{C}=\left[\begin{array}{ccc}
C  & 0\\
 \end{array}\right],$$
 be a second order transfer function. Let $\mathbb{V}_m,\ \mathbb{W}_m \in \R^{2n\times ms}$ be defined as:
 $$\mathbb{V}_m=\left[\begin{array}{ccc}
\mathbb{V}^1_m\\
\mathbb{V}^2_m\\
 \end{array}\right],\quad \mathbb{W}_m=\left[\begin{array}{ccc}
\mathbb{W}^1_m\\
\mathbb{W}^2_m\\
 \end{array}\right],$$
 where $\mathbb{V}_m^1,\ \mathbb{V}_m^2,\ \mathbb{W}_m^1 \ \text{and} \ \mathbb{W}_m^2\in \R^{n\times ms}$, with $(\mathbb{W}_m^1)^T\mathbb{V}_m^1=(\mathbb{W}_m^2)^T\mathbb{V}_m^2=I_{ms}$. Let us construct the $2n \times 2ms$ projecting matrices as
 $$\mathcal{V}_m=\left[\begin{array}{ccc}
\mathbb{V}^1_m & 0\\
0 &\mathbb{V}^2_m\\
 \end{array}\right],\quad \mathcal{W}_m=\left[\begin{array}{ccc}
\mathbb{W}^1_m & 0\\
0 & \mathbb{W}^2_m\\
 \end{array}\right].$$
 Define the second order transfer function of order $m$ by
 $$\left.\begin{array}{llll}
 \mathcal{F}_m(\omega) &=& \mathcal{C}\mathcal{V}_m(\omega I_{2mp}-\mathcal{W}_m^T\mathcal{A}\mathcal{V}_m)^{-1}\mathcal{W}_m^T\mathcal{B}\\[0.2cm]
  &=& \mathcal{C}_m(\omega I_{2mp}-\mathcal{A}_m)^{-1}\mathcal{B}_m,
 \end{array}\right.$$
 if we have 
 $$ Span\{(\sigma_1 I-\mathcal{A})^{-1}\mathcal{B}R_1,...,(\sigma_m I-\mathcal{A})^{-1}\mathcal{B}R_m \} \subseteq Range(\mathbb{V}_m),$$
 and 
  $$ Span\{(\mu_1 I-\mathcal{A})^{-T}\mathcal{C}^TL_1,...,(\mu_m I-\mathcal{A})^{-T}\mathcal{C}^TL_m \} \subseteq Range(\mathbb{W}_m),$$
where $\sigma_i,\ \mu_i$, are the interpolation points, and $R_i,\ L_i\in \R^{p\times s},$ for $ i=1,...m$ are the tangential directions. Then the reduced order transfer function $\mathcal{F}_m(\omega)$     interpolates the values of the original transfer function $\mathcal{F}(\omega)$  and preserves the structures of the second-order model provided that the matrix $(\mathbb{W}_m^2)^T\mathbb{V}_m^1$  is non-singular.

\end{theorem}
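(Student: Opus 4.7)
The plan is to reduce the theorem to two ingredients already at our disposal: the preceding Lemma, which guarantees that a block-diagonal bi-orthogonal projection of the first-order system preserves second-order structure, and Theorem \ref{theo1}, which gives tangential interpolation from Krylov subspace containment. Both are applied to the equivalent first-order realization $(\mathcal{A},\mathcal{B},\mathcal{C})$, after which the identity $\mathcal{F}(\omega) = F(\omega)$ established earlier in this section transports the conclusion back to the second-order transfer functions.

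As a first step, I would verify the bi-orthonormality of the block-diagonal projectors. The assumptions $(\mathbb{W}_m^1)^T\mathbb{V}_m^1 = (\mathbb{W}_m^2)^T\mathbb{V}_m^2 = I_{ms}$ together with the block-diagonal structure of $\mathcal{V}_m$ and $\mathcal{W}_m$ yield $\mathcal{W}_m^T\mathcal{V}_m = I_{2ms}$. Invoking the preceding Lemma under the stated non-singularity of $(\mathbb{W}_m^2)^T\mathbb{V}_m^1$, we conclude that there exist matrices $K_m, D_m, B_m, C_m$ of appropriate sizes so that
$$\mathcal{F}_m(\omega) = C_m(\omega^2 I_m + \omega D_m + K_m)^{-1}B_m =: F_m(\omega),$$
which establishes the structure-preservation half of the claim.

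The interpolation half rests on the elementary observation that for any $\alpha \in \R^{ms}$,
$$\mathbb{V}_m\alpha = \begin{pmatrix}\mathbb{V}_m^1\alpha \\ \mathbb{V}_m^2\alpha\end{pmatrix} = \mathcal{V}_m\begin{pmatrix}\alpha\\ \alpha\end{pmatrix},$$
which forces $\mathrm{Range}(\mathbb{V}_m) \subseteq \mathrm{Range}(\mathcal{V}_m)$, and symmetrically for the $\mathbb{W}$-side. Combining this with the Krylov subspace hypotheses, every column of $(\sigma_i I_{2n}-\mathcal{A})^{-1}\mathcal{B}R_i$ lies in $\mathrm{Range}(\mathcal{V}_m)$ and every column of $(\mu_i I_{2n}-\mathcal{A})^{-T}\mathcal{C}^T L_i$ lies in $\mathrm{Range}(\mathcal{W}_m)$. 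Applying Theorem \ref{theo1}(1)--(2) to the first-order realization once for each pair $(\sigma_i, R_i)$ and $(\mu_i, L_i)$ yields
$$\mathcal{F}_m(\sigma_i)R_i = \mathcal{F}(\sigma_i)R_i,\qquad L_i^T\mathcal{F}_m(\mu_i) = L_i^T\mathcal{F}(\mu_i),\qquad i=1,\ldots,m,$$
which, together with $\mathcal{F}\equiv F$ and $\mathcal{F}_m\equiv F_m$ from the first step, are exactly the required tangential interpolation conditions for the original second-order transfer functions.

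The main obstacle I anticipate is largely bookkeeping: the Krylov hypothesis is phrased in terms of the stacked matrix $\mathbb{V}_m$, whereas the actual projection is carried out by the strictly larger block-diagonal $\mathcal{V}_m$, and one must verify explicitly that this enlargement of the range preserves the applicability of Theorem \ref{theo1}. Once the range inclusion is established, the argument is a clean assembly of the Lemma and Theorem \ref{theo1}, with the non-singularity of $(\mathbb{W}_m^2)^T\mathbb{V}_m^1$ playing the double role of enabling the extraction of the second-order matrices $K_m, D_m, B_m, C_m$ and, via $\mathcal{W}_m^T\mathcal{V}_m = I_{2ms}$, ensuring that the reduced quadratic pencil $\omega^2 I_m + \omega D_m + K_m$ is well defined.
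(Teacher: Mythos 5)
The paper states this theorem without any proof, so there is nothing in the text to compare your argument against; judged on its own, your proposal is correct and follows the standard route of the cited references (Beattie--Gugercin, Chahlaoui et al.). The two load-bearing points are exactly the ones you isolate: first, $\mathcal{W}_m^T\mathcal{V}_m=I_{2ms}$ follows from the two block bi-orthonormality assumptions, so the preceding Lemma (with $(\mathbb{W}_m^2)^T\mathbb{V}_m^1$ non-singular) yields the second-order form of $\mathcal{F}_m$; second, $\mathrm{Range}(\mathbb{V}_m)\subseteq\mathrm{Range}(\mathcal{V}_m)$, since $\mathbb{V}_m\alpha$ equals $\mathcal{V}_m$ applied to the stacked vector $(\alpha^T,\alpha^T)^T$, and the analogous inclusion holds for $\mathcal{W}_m$, so the Krylov hypotheses transfer to the enlarged block-diagonal projectors and Theorem \ref{theo1}(1)--(2) applied to $(\mathcal{A},\mathcal{B},\mathcal{C})$ gives the tangential interpolation conditions. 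One minor correction: the identity $\mathcal{W}_m^T\mathcal{V}_m=I_{2ms}$ (and hence the well-posedness of the reduced pencil and the applicability of Theorem \ref{theo1}) comes from the bi-orthonormality hypotheses alone; the non-singularity of $(\mathbb{W}_m^2)^T\mathbb{V}_m^1$ is needed only inside the Lemma to recover $K_m$, $D_m$, $B_m$, $C_m$ from the projected first-order realization, so it does not play the ``double role'' you attribute to it.
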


\section{Numerical experiments}\quad 

\noindent In this section, we present some numerical examples to show the effectiveness of the  Adaptive Block Tangential Lanczos (ABTL) algorithm. All the experiments were carried out using the CALCULCO computing platform, supported by SCoSI/ULCO (Service Commun du Syst\`eme d'Information de l'Unive-rsit\'e du Littoral C\^ote d'Opale). The algorithms were coded in Matlab R2018a.  We used the  following functions  from LYAPACK \cite{B011}:
\begin{itemize}
\item  lp$\_$lgfrq: Generates a set of logarithmically distributed frequency sampling points.
\item  lp$\_$gnorm: Computes $\|H(j\omega)-H_{m}(j\omega)\|_2$.
\end{itemize}
\noindent We used various matrices from LYAPACK and from the Oberwolfach collection\footnote{Oberwolfach model reduction benchmark collection 2003. http://www.imtek.de/simulation/benchmark}. These matrix tests are reported
in Table \ref{tab1} with   different values of $p$ and the used values of $s$.

\begin{table}[h!]
\begin{center}
\caption{Matrix Tests}\label{tab1}
   \begin{tabular}{ llllllllllllllllllllllllllllll }
     \hline
     Model & & & & & & & & & &  \;\; $n$ && \;\;$p$ & & \;\;$s$ \\
\hline 
FDM10000                  & & & & & & & & & &    n = 40 000 &&   p = 6   &&  s = 3\\ 
FDM90000                  & & & & & & & & & &    n = 90 000 &&   p = 6   &&  s = 3\\ 
 1DBeam-LF1000    & & & & & & & & & &    n = 19 998  &&   p = 4    &&  s = 2\\    
  1DBeam-LF5000    & & & & & & & & & &    n = 19 994  &&   p = 4    &&  s = 2\\  
   RAIL79841   & & & & & & & & & &    n = 17 361  &&   p = 12    &&  s = 2\\  
     \hline
   \end{tabular}
\end{center}
\end{table}

\subsection{Example 1: FDM model}\label{fdm}
\noindent The finite differences semi-discretized heat equation will serve as the most basic test example here.
Its corresponding matrix $A$, is obtained from the centered finite difference discretization of the operator,
$$ L_A(u) = \Delta u-f(x,y)\frac{\partial u}{\partial x}-g(x,y)\frac{\partial u}{\partial y}-h(x,y)u,$$
on the unit square $[0, 1]\times [0, 1]$ with homogeneous Dirichlet boundary conditions with
$$
\left\{\begin{array}{lll}
f(x,y) & = & log(x + 2y+1) \\
g(x,y) & = & e^{x + y} \\
h(x,y) & = & x + y. \\
\end{array}\right.
$$
The matrices $B$ and $C$ were random matrices with entries uniformly distributed
in $[0, 1]$. The
dimension of $A$ is $n = n_0^2$, where $n_0^2$ is the number of inner grid points in each direction. The advantages of this model are: \begin{itemize}
\item It's easy to understand.

\item The discretization using the finite difference method (FDM) is easy to implement.

\item It allows for simple generation of almost arbitrary size test problems.
\end{itemize}

\noindent In Table \ref{tab2}, we compared the execution times and the ${\mathcal{H_{\infty}}}$  norm $\parallel H-H_m \parallel_{ {\mathcal H}_{\infty}}$ of the ABTL algorithm  with the Iterative Rational Krylov Algorithm (IRKA \cite{B37}) and the adaptive tangential method represented by Druskin and Simonsini (TRKSM) see for more details \cite{B010},  with different values of  $m$.  We notice that  the obtained timing didn't contain the execution times used to obtain the errors. As can be seen from   the results in  Table \ref{tab2},  the cost of IRKA method is much higher than the cost required with  the adaptive block tangential Lanczos method. 

\begin{table}[h!]
	\begin{center}
		\caption{The calculation time and the ${\mathcal{H_{\infty}}}$  error-norm}\label{tab2}
		\begin{small}
		\begin{tabular}{  ll|lllllllllllll }
			\hline
			Model      & &   \qquad ABTL    &  \qquad  IRKA  & \qquad  TRKSM  \\
				\hline 	
			& &      Time \quad  Err-$\mathcal{H}_{\infty}$  &    Time \quad Err-$\mathcal{H}_{\infty}$   &    Time \quad Err-$\mathcal{H}_{\infty}$   \\
			\hline 
			 FDM40.000   & m=20 & $13.29s \quad   5.39\times 10^{-4} $  &  $126.28s \quad  1.06\times 10^{-4}$ &  $34.89s \quad  7.94\times 10^{-4}$ \\ 
	& m=30 & $20.79s \quad   7.87\times 10^{-5} $  &  $188.91s \quad  2.24\times 10^{-5}$ &  $36.82s \quad  6.42\times 10^{-5}$ \\
  & m=40 & $27.21s \quad   1.48\times 10^{-5} $  &  $269.36s \quad  1.56\times 10^{-5}$ &  $38.12s \quad  1.93\times 10^{-5}$ \\ 
			\hline 
FDM90.000  & m=20 & $43.29 \quad  1.27\times 10^{-2} $  &  $>2000s$ \quad  -- --&  $126.97s \quad  2.61\times 10^{-1}$ \\  
& m=30 & $64.11 \quad   1.36\times 10^{-3} $  &  $>2000s$ \quad  -- --&  $179.55s \quad  1.43\times 10^{-1}$ \\ 
			& m=40 & $87.19 \quad   6.25\times 10^{-4} $  &  $>2000s$ \quad  -- --&  $186.38s \quad  4.61\times 10^{-2}$ \\ 
			\hline 
		\end{tabular}
		  \end{small}
	\end{center}
\end{table}
\subsection{Example 2: Linear 1D Beam}\quad 

\noindent  Moving structures are an essential part for many micro-system devices, among them fluidic components like pumps and electrically controllable valves, sensing cantilevers, and optical structures. While the single component can easily be simulated on a usual desktop computer, the calculation of a system of many coupled devices still presents a challenge. This challenge is raised by the fact that many of these devices show a nonlinear behavior. This model describes a slender beam with four degrees of freedom per node: "$x$ the axial displacement", "$\Theta_x$ the axial rotation", "$y$ the flexural displacement" and "$\Theta_z$ the flexural rotation". The model is from the Oberwolfach collection.  The matrices are obtained by using  the finite element discretization presented in \cite{B37}. We used two examples of linear 1D Beam model:
\begin{table}[h!]
\begin{center}
   \begin{tabular}{ llllllllllllllllllllllllllllll }
     \hline
     The file name & & Degrees of freedom  & Num. nodes &   Dimension $n$  \\
\hline 
 1DBeam-LF100    & &  flexural ($\Theta_z$ and $y$)& 10000 &    n = 19998  \\  
 1DBeam-LF5000      & & ( $\Theta_z$ and $y$ ), ( $\Theta_x$ and $x$ ) & 50000 &    n = 19994  \\ 
     \hline
   \end{tabular}
\end{center}
\end{table}

\begin{figure}[!h]
   \begin{minipage}[c]{.46\linewidth}
      \includegraphics[width=1.0\textwidth]{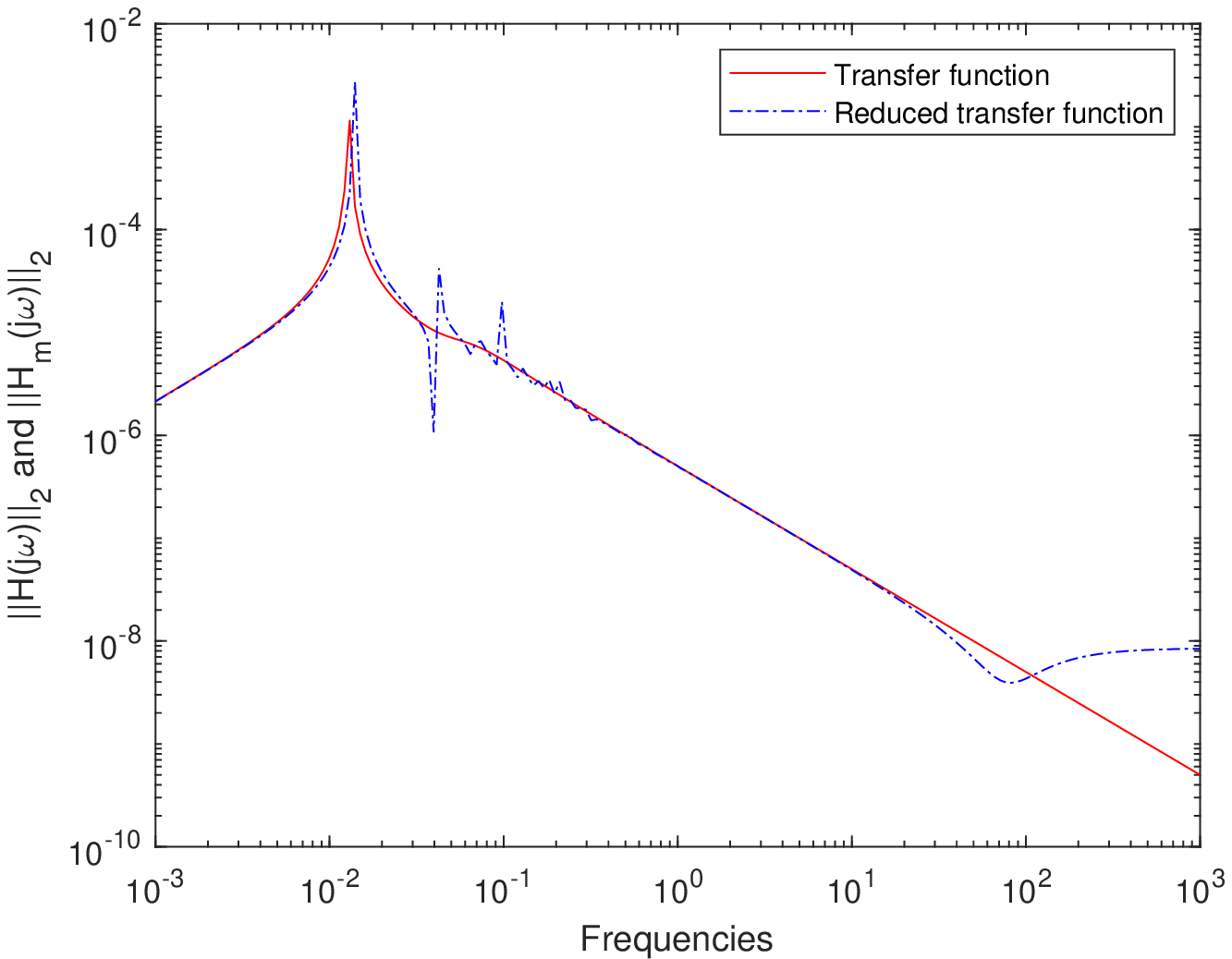}
     \caption{1DBeam-LF5000 model: m=20.}
      \label{fig01}   
   \end{minipage} \hfill
   \begin{minipage}[c]{.46\linewidth}
      \includegraphics[width=1.0\textwidth]{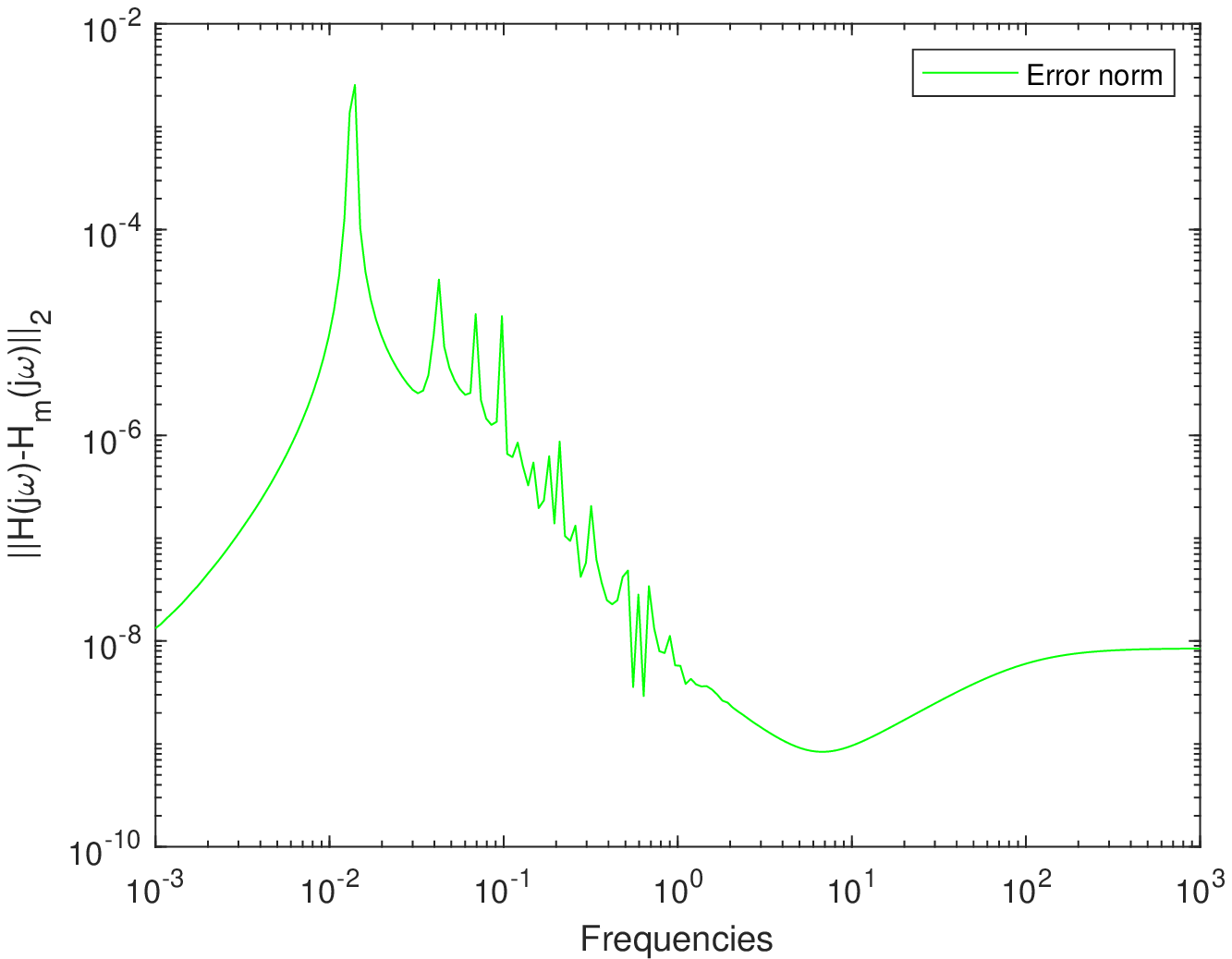}
      \caption{The error norm.}
     \label{fig02}
   \end{minipage}
\end{figure}
\noindent The Figures  \ref{fig01}  above represent the  norm of the original transfer function  $\|H(j\omega)\|_2$ and the  norm of the reduced transfer function  $\|H_m(j\omega)\|_2$ versus the frequencies  $\omega \in [10^{-3},\; 10^3]$ of the  1Dbeam-LF50000 model and it is a second-order
model of dimension $2\times n = 39988$ with one input and one output. The Figure \ref{fig02}  represents the exact error $\|H(j\omega) - H_m(j\omega)\|_2$ versus the frequencies.

\begin{figure}[!h]
   \begin{minipage}[c]{.46\linewidth}
      \includegraphics[width=1.0\textwidth]{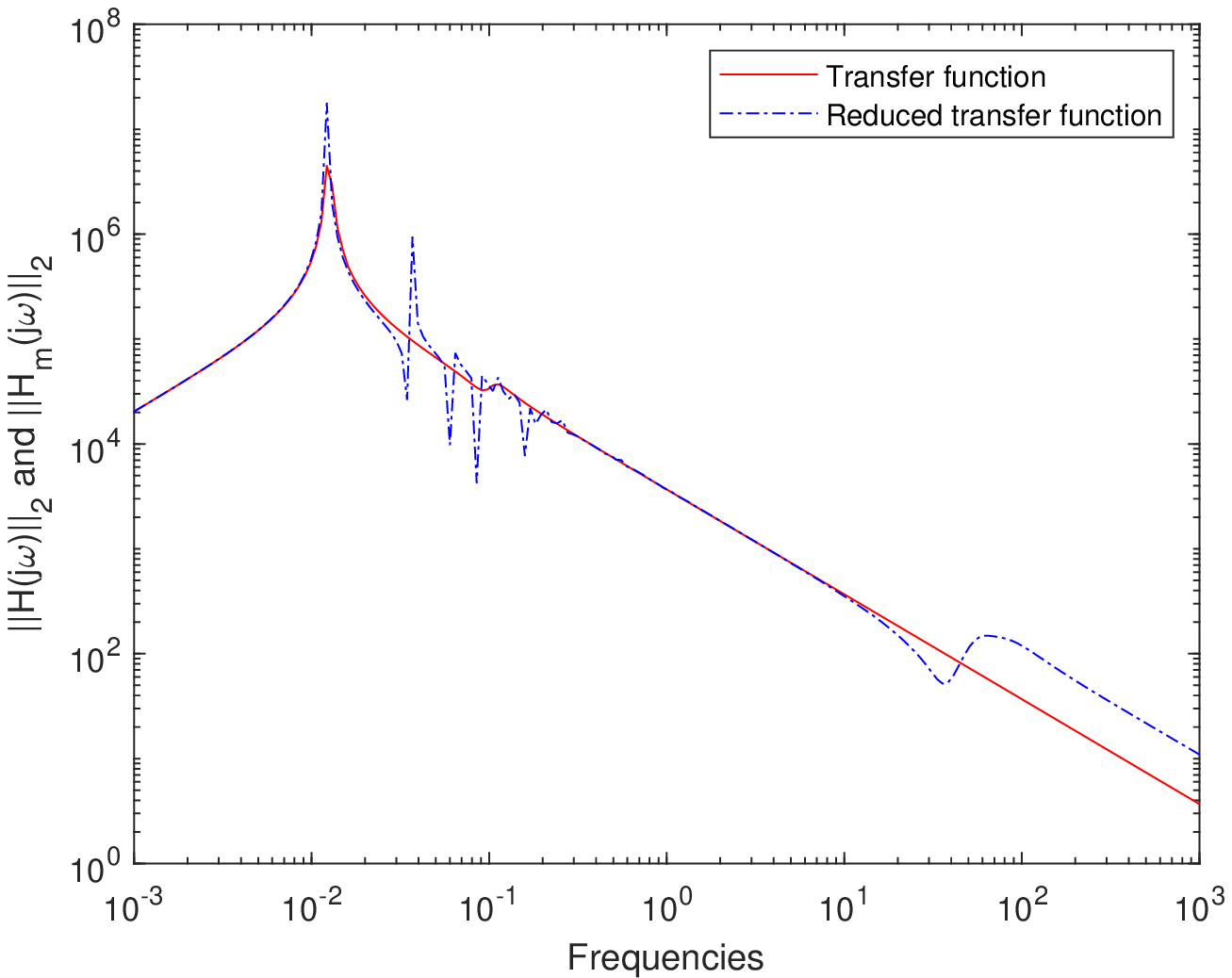}
     \caption{1DBeam-LF5000 model, m=20.}
      \label{fig03}   
   \end{minipage} \hfill
   \begin{minipage}[c]{.46\linewidth}
      \includegraphics[width=1.0\textwidth]{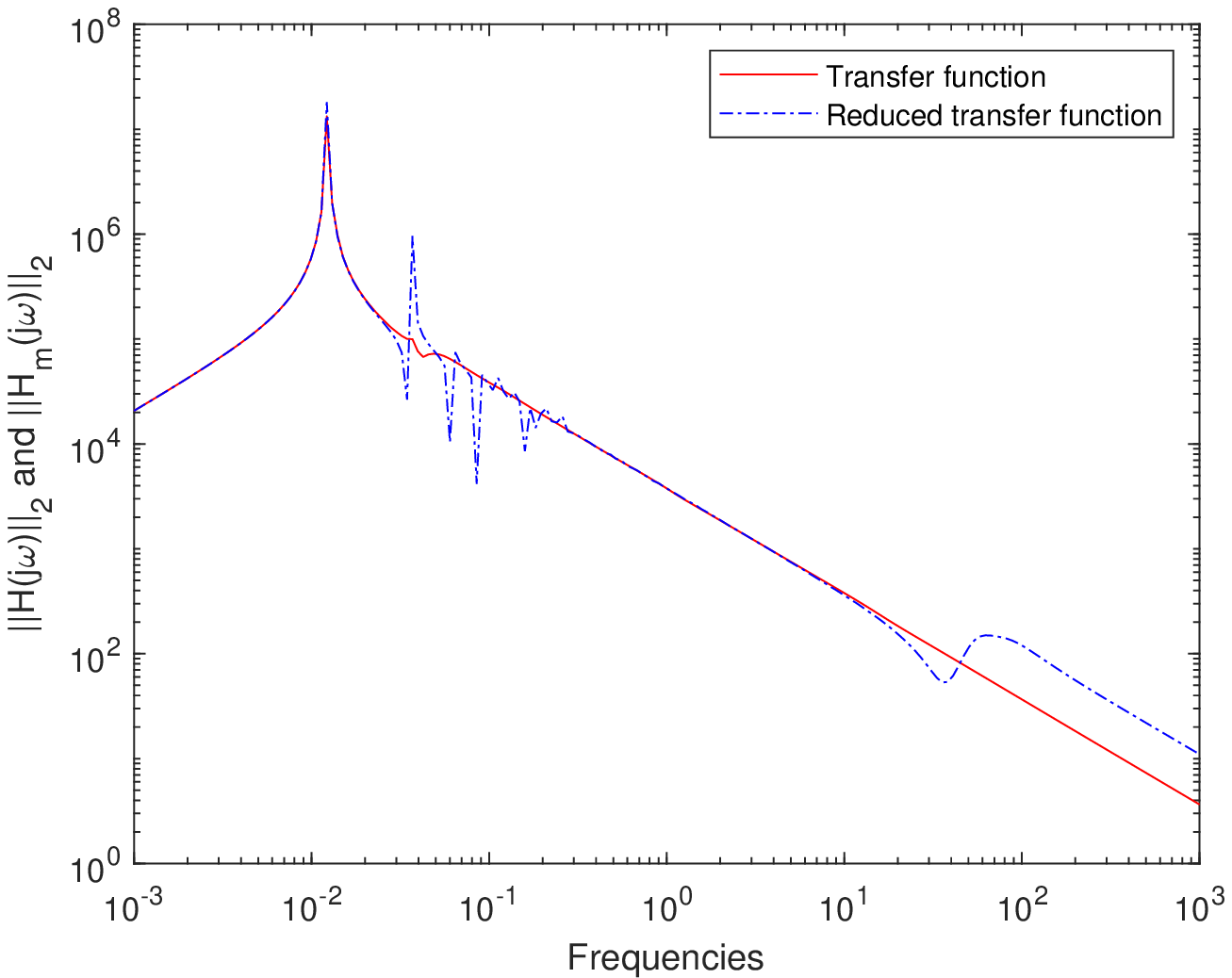}
      \caption{1DBeam-LF5000 mode, m=40.}
     \label{fig04}
   \end{minipage}
\end{figure}
\noindent The plots in  Figures \ref{fig03} and \ref{fig04}, represent the original transfer function  $\|H(j\omega)\|_2$ and the  norm of the reduced transfer function  $\|H_m(j\omega)\|_2$ where we modified the matrices B and C (random matrices) to get a MIMO system with four inputs and four outputs.\\

\noindent The plots in  Figures \ref{fig05} and \ref{fig06}, represent the exact error $\|H(j\omega) - H_m(j\omega)\|_2$ versus the frequencies $\omega \in [10^{-6},\; 10^6]$ of the  1Dbeam-LF10000 model  $2\times n = 39996$ with one input and one output. \\ 
\begin{figure}[!h]
   \begin{minipage}[c]{.46\linewidth}
      \includegraphics[width=1.0\textwidth]{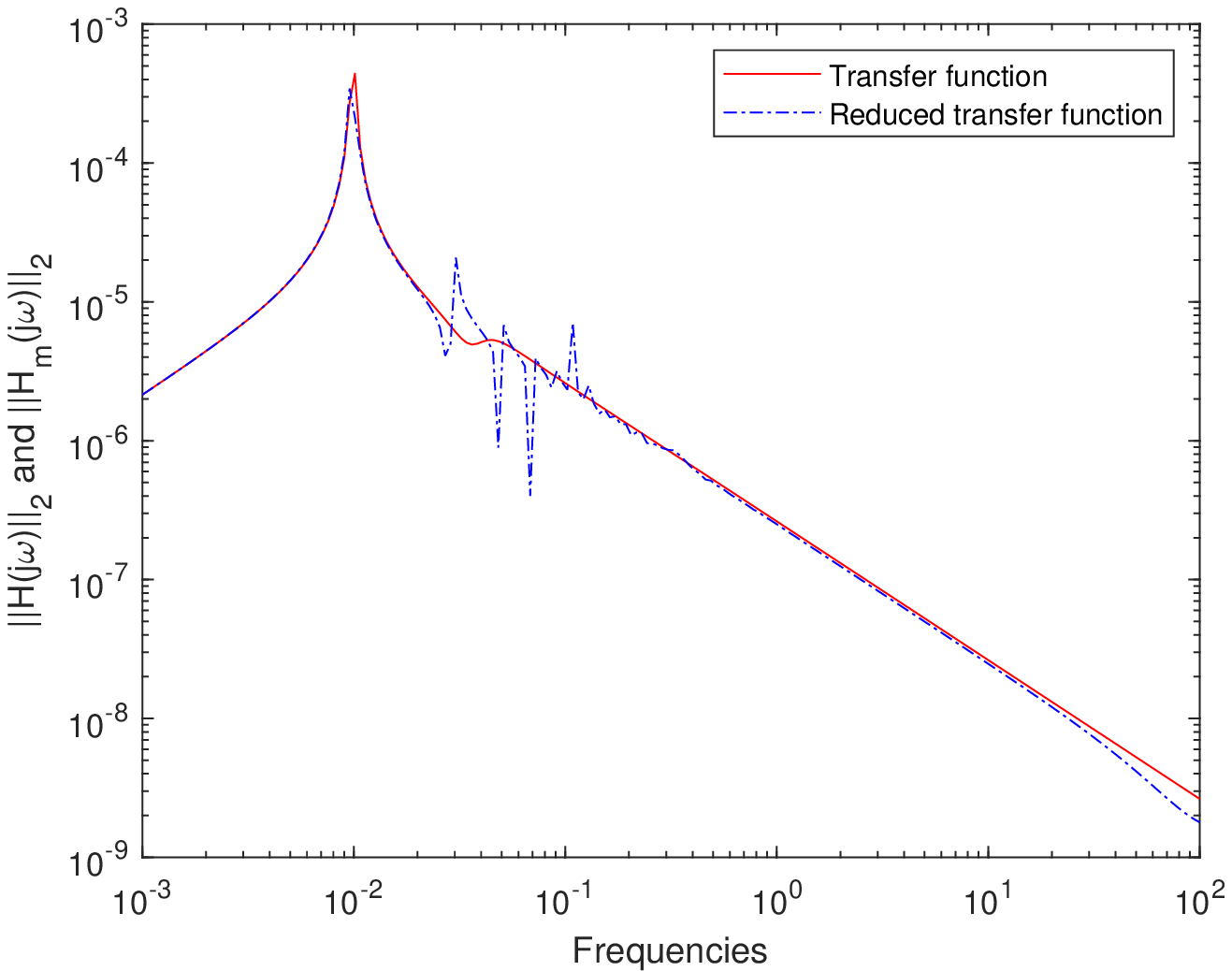}
     \caption{1DBeam-LF10000 model, m=10.}
      \label{fig05}   
   \end{minipage} \hfill
   \begin{minipage}[c]{.46\linewidth}
      \includegraphics[width=1.0\textwidth]{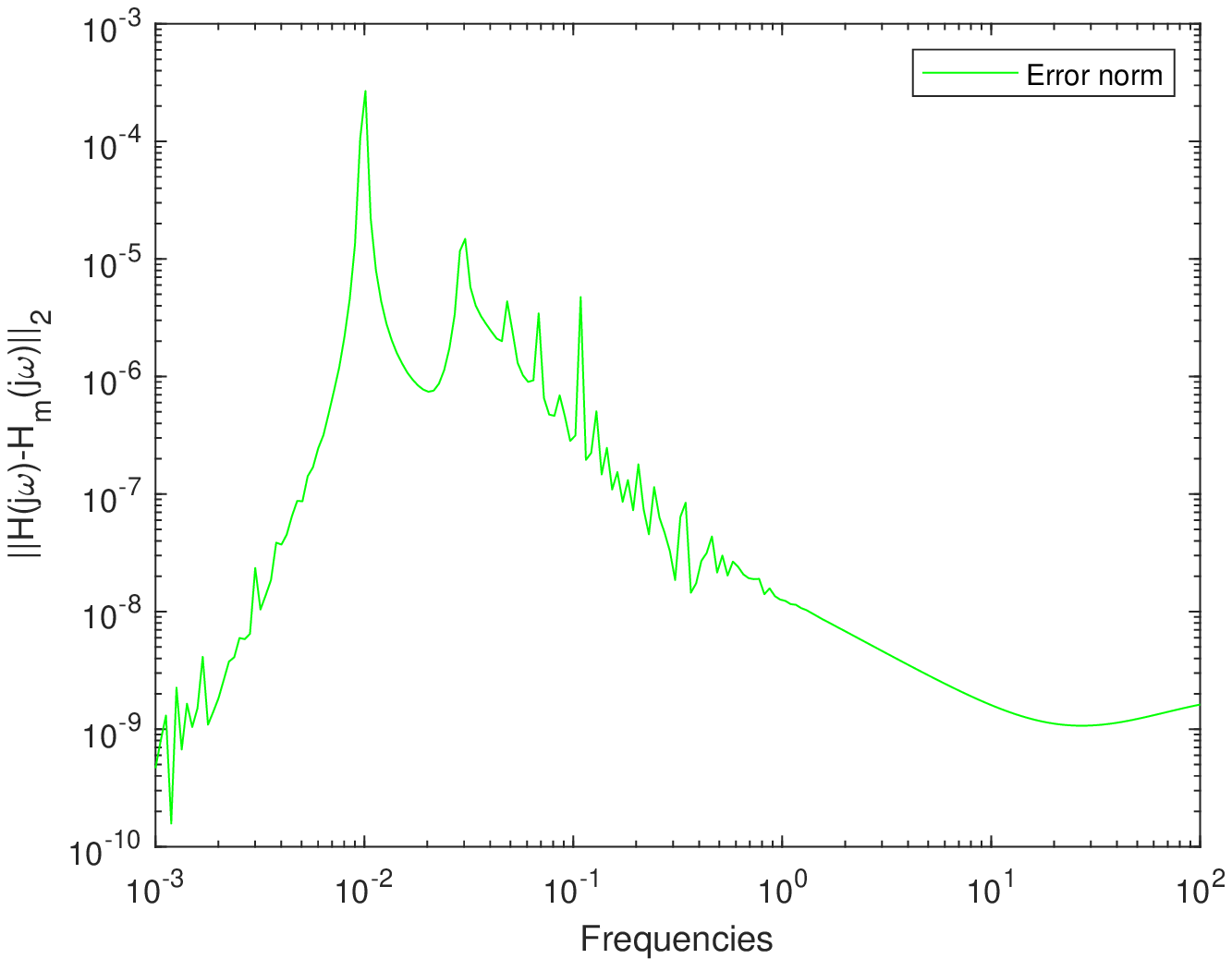}
      \caption{The error norm.}
     \label{fig06}
   \end{minipage}
\end{figure}
 
 \subsection{Example 3: Butterfly Gyroscope}\quad

 \noindent The structural model of the gyroscope has been done in ANSYS (the global leader in engineering simulation) using quadratic tetrahedral elements. The model used here is a simplified one with a coarse mesh as it is designed to test the model reduction approaches. It includes the pure structural mechanics problem only. The load vector is composed from time-varying nodal forces applied at the centers of the excitation electrodes. The Dirichlet boundary conditions have been applied to all degree of freedom of the nodes belonging to the top and bottom surfaces of the frame. This benchmark is also part of the Oberwolfach Collection. It is a second-order model of dimension $n=17361$, (then the matrix  $ \mathcal{A}$ is of size $2\times n = 34722$)  with the matrix $B = C^T$ to get a MIMO system with 12  inputs and 12 outputs.\\

\noindent The plots in Figure \ref{fig07}  represent $\|H(j\omega)\|_2$ and the  norm of the reduced transfer function  $\|H_m(j\omega)\|_2$. Figure \ref{fig08}  represent the exact error $\|H(j\omega) - H_m(j\omega)\|_2$ versus the frequencies. The dimension of the reduced model is $m=40$.  The  execution time was 41.83 seconds with $\mathcal{H}_{\infty}$-err norms equal to $1.73\times10^{-3}$.

\begin{figure}[!h]
   \begin{minipage}[c]{.46\linewidth}
      \includegraphics[width=1.0\textwidth]{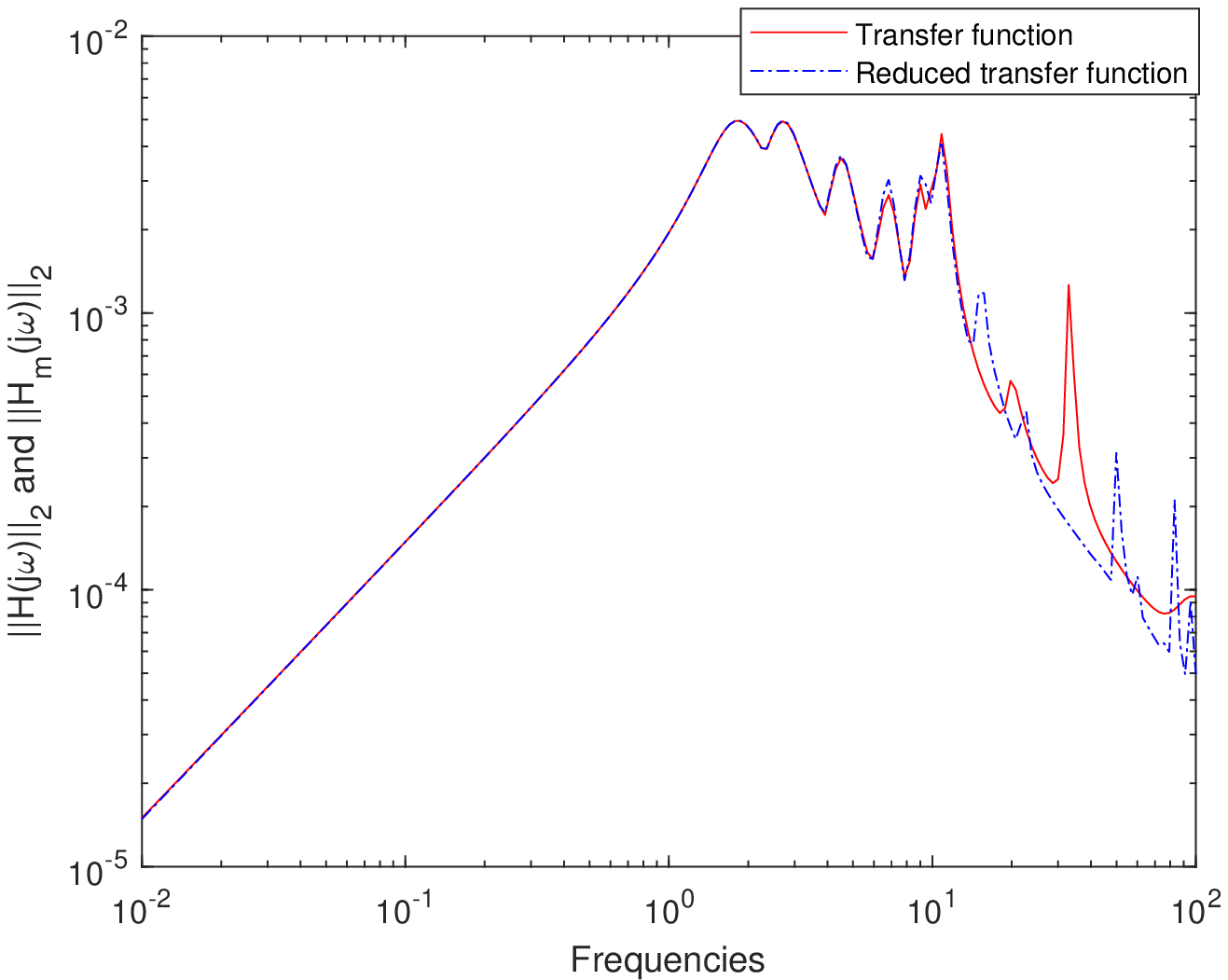}
     \caption{Butterfly model, m=40.}
      \label{fig07}   
   \end{minipage} \hfill
   \begin{minipage}[c]{.46\linewidth}
      \includegraphics[width=1.0\textwidth]{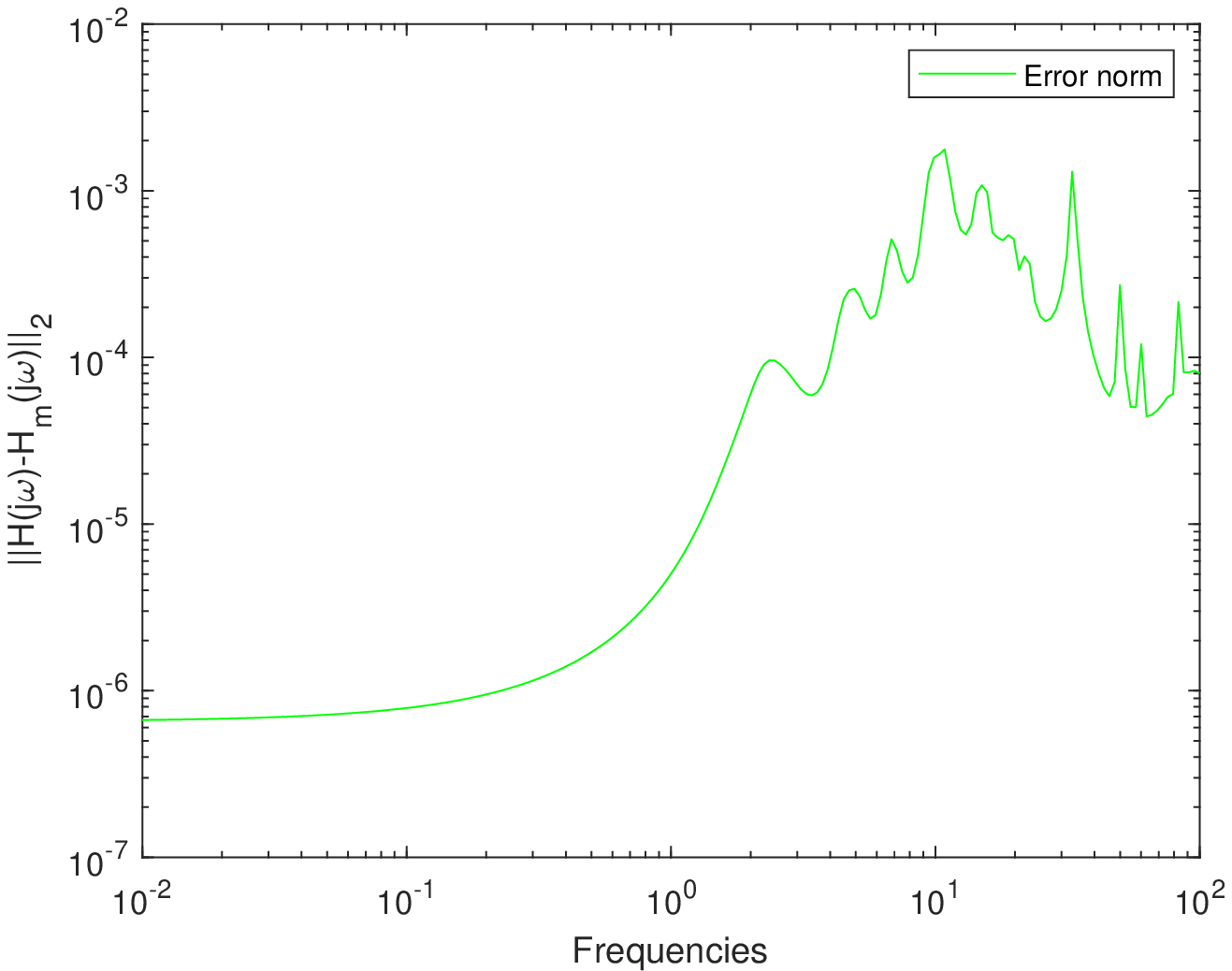}
      \caption{The error norm.}
     \label{fig08}
   \end{minipage}
\end{figure}
	
\noindent The plots in Figure \ref{fig09}  represent $\|H(j\omega)\|_2$ and the  norm of the reduced transfer function  $\|H_m(j\omega)\|_2$. Figure \ref{fig10}  represent the exact error $\|H(j\omega) - H_m(j\omega)\|_2$ versus the frequencies with $m=80$. The  execution time was 96.26 seconds with $\mathcal{H}_{\infty}$-err norms equal to $1.52\times10^{-2}$.

\begin{figure}[!h]
   \begin{minipage}[c]{.46\linewidth}
      \includegraphics[width=1.0\textwidth]{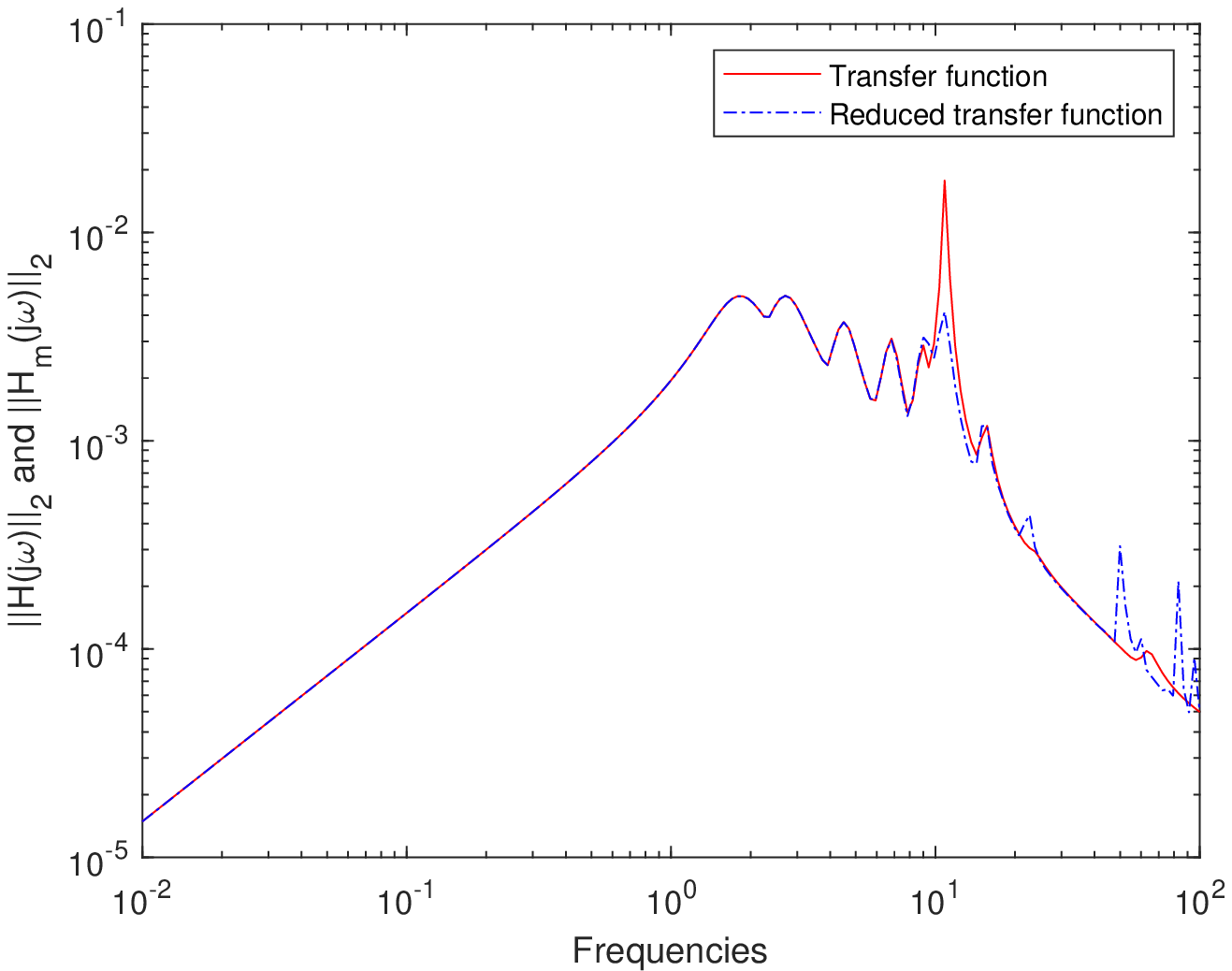}
     \caption{Butterfly model, m=80.}
      \label{fig09}   
   \end{minipage} \hfill
   \begin{minipage}[c]{.46\linewidth}
      \includegraphics[width=1.0\textwidth]{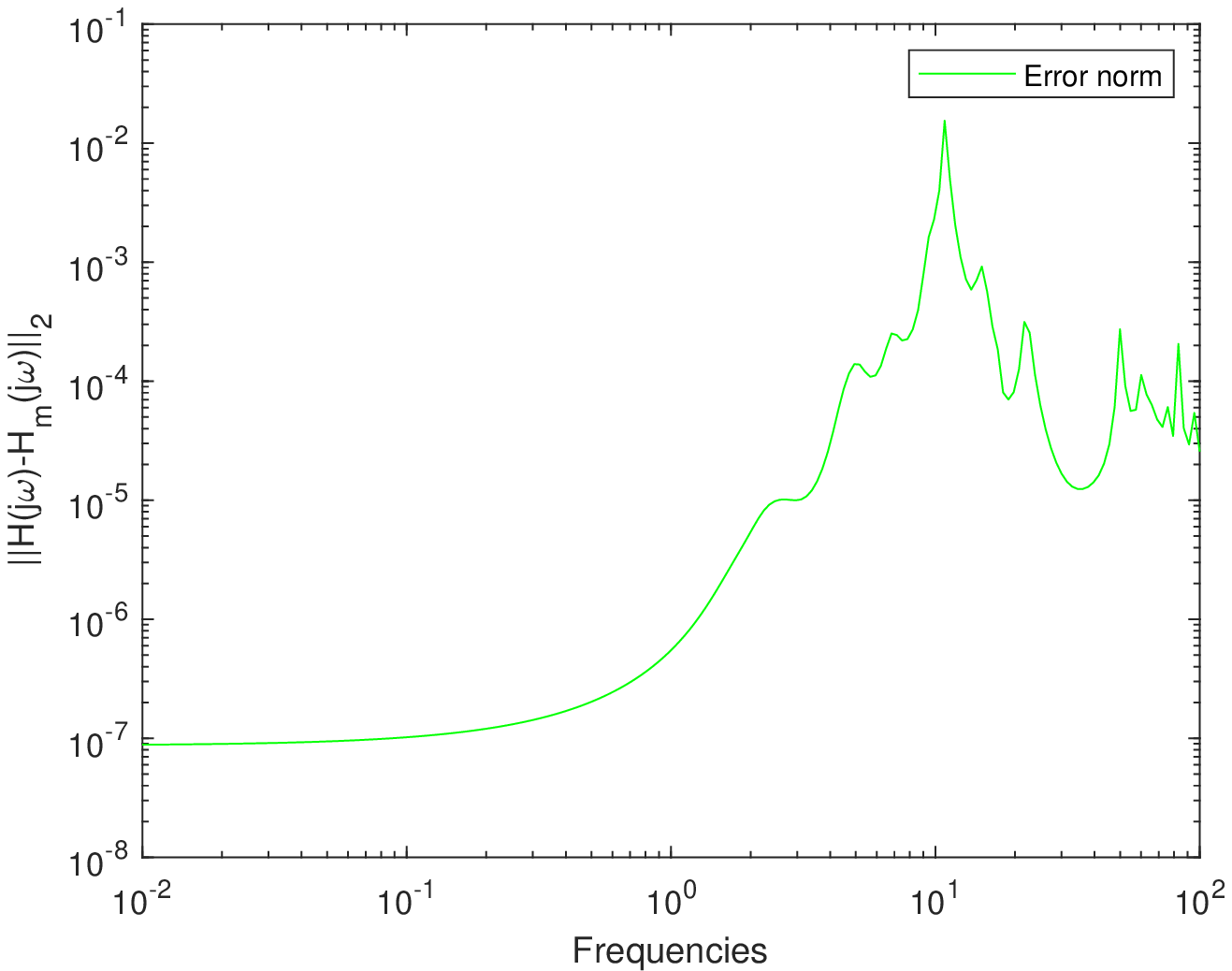}
      \caption{The error norm.}
     \label{fig10}
   \end{minipage}
\end{figure}
\noindent we notice that the tree methods coincide, with an execution time almost the same (TRKSM: 9.26 seconds, ABTL: 9.76 seconds, ABTA: 10.45 seconds)

\vspace{0.3cm}
\newpage
\section{Conclusion}
In the present paper, we proposed a new approach based on block  tangential Krylov subspaces to
compute low rank approximation of  large-scale first and second order  dynamical systems with multiple inputs and multiple outputs (MIMO). The method  constructs sequences of orthogonal blocks from matrix tangential Krylov subspaces using the block Lanczos-type approach. The interpolation shifts  and the tangential directions are selected in an adaptive way by maximizing the residual norms. We gave some new algebraic properties and compared our algorithms with well knowing methods  to show  the effectiveness of this latter.
\vspace{0.3cm}

\end{document}